\newtheorem{theorem}{Theorem}[section]
\newtheorem{lemma}[theorem]{Lemma}
\newtheorem{corollary}[theorem]{Corollary}
\newtheorem{definition}[theorem]{Definition}
\newtheorem{remark}[theorem]{\it Remark}
\newtheorem{example}[theorem]{Example}
\newtheorem{proposition}[theorem]{Proposition}
\newtheorem{conjecture}[theorem]{Conjecture}
\def\GL{\mathrm{GL}}
\def\SL{\mathrm{SL}}
\def\C{\mathbb{C}}
\def\R{\mathbb{R}}
\def\Z{\mathbb{Z}}
\def\Z{\mathbb{Z}}
\def\tree{\mathcal{T}}
\def\q{/\!\!/}
\def\ql{\backslash \!\! \backslash}
\def\X{\mathcal{X}}
\title[Character Varieties are Gorenstein]{Character Varieties of Free Groups are Gorenstein but not always Factorial}
\author[S. Lawton]{Sean Lawton}
\address{Department of Mathematical Sciences, George Mason University,
4400 University Drive,
Fairfax, Virginia  22030, USA}
\email{slawton3@gmu.edu}
\author[C. Manon]{Christopher Manon}
\address{Department of Mathematical Sciences, George Mason University,
4400 University Drive,
Fairfax, Virginia  22030, USA}
\email{cmanon@gmu.edu}
\subjclass[2010]{14D20,14L30,16U30,13H10,16E65}
\keywords{character variety, moduli space, free group, reductive, Gorenstein, UFD}
\begin{document}

\begin{abstract}
Fix a rank $g$ free group $F_g$ and a connected reductive complex algebraic group $G$.  Let $\mathcal{X}(F_g, G)$ be the $G$--character variety of $F_g$.  When the derived subgroup $DG<G$ is simply connected we show that $\mathcal{X}(F_g, G)$ is factorial (which implies it is Gorenstein), and provide examples to show that when $DG$ is not simply connected $\mathcal{X}(F_g, G)$ need not even be locally factorial.  Despite the general failure of factoriality of these moduli spaces, using different methods, we show that $\mathcal{X}(F_g, G)$ is always Gorenstein.  
\end{abstract} 


\maketitle

\section{Introduction}

Let $\Gamma$ be a finitely generated group (generated by $g$ elements), perhaps the fundamental group of a manifold or orbifold $M$, and let $G$ be a reductive affine algebraic group over an algebraically closed field $\Bbbk$.  Then the collection of group homomorphisms $\mathrm{Hom}(\Gamma, G)$ is naturally a $\Bbbk$--variety cut out of the product variety $G^g$ via the relations defining $\Gamma$.  Since $G$ admits a faithful morphism into $\mathrm{GL}_n(\Bbbk)$, we call $\mathrm{Hom}(\Gamma, G)$ the $G$--representation variety of $\Gamma$.  Standard in representation theory is that two representations are equivalent if they are conjugate.  As we are interested in $G$--valued representations, we consider the conjugation action of $G$ on the representation variety.  The orbit space $\mathrm{Hom}(\Gamma, G)/G$ is generally not Hausdorff, but it is homotopic (see \cite[Proposition 3.4]{FLR}) to the geometric points of the Geometric Invariant Theory (GIT) quotient $\mathcal{X}(\Gamma, G):=\mathrm{Hom}(\Gamma, G)\q G$; which as usual, is the spectrum of the ring of $G$--invariant elements in the coordinate ring $\Bbbk[\mathrm{Hom}(\Gamma, G)]$.  

The spaces $\mathcal{X}(\Gamma, G)$, the {\it $G$-character varieties of $\Gamma$}, constitute a large class of affine varieties (see \cite{Rap}).  More importantly, they are of central interest in the study of moduli spaces (see \cite{A-B, Hitchin, NS, Simpson1, Simpson2}); finding applications in mathematical physics (see \cite{Kac-Smilga,Witten,HaTh,KW}), the study of geometric manifolds (see \cite{GoldmanGStructure}), and knot theory (see \cite{CCGLS}). 

The purpose of this note is to prove that the singularities of a character variety $\mathcal{X}(F_g, G)$ are always well behaved, when $G$ is reductive and $F_g$ is a rank $g$ free group.  This is in stark contrast to the situation when $\Gamma$ is not free; 
see \cite{KaMi}. In particular, we characterize when $\mathcal{X}(F_g, G)$ is factorial (in arbitrary characteristic) and show it is always Gorenstein (when $\Bbbk=\C$).  This allows us to describe conditions when the Picard group of $\mathcal{X}(F_g,G)$ is trivial. 

In general, regular rings (smooth varieties) are local complete-intersection rings (varieties locally cut out by codimension relations) which are Gorenstein.  All Gorenstein rings are Cohen-Macaulay, and assuming the singular locus lies in codimension at least 2, this further implies normality (which implies irreducibility). So Gorenstein varieties (coordinate rings are Gorenstein) are situated between complete-intersections and Cohen-Macaulay varieties.  Factorial varieties (coordinate rings are unique factorization domains) fall slightly in between.  When they are additionally normal, they have the property that all irreducible hypersurfaces (divisors) come from rational functions (principal divisors).  A factorial Cohen-Macaulay variety is automatically Gorenstein (but not conversely).  For $G=\SL_n(\C)$, we describe exactly when these properties hold for $\X(F_g,G)$; see Example \ref{typeA}.

Here is the main theorem of the paper:
\begin{theorem}\label{maintheorem}
For any $g\geqslant 1$ and any connected complex reductive algebraic group $G$, the space $\mathcal{X}(F_g,G)$ is Gorenstein.  If the derived subgroup $DG<G$ is simply connected, then $\mathcal{X}(F_g,G)$ is factorial.   If the derived subgroup $DG<G$ is not simply connected, there are examples where $\mathcal{X}(F_g,G)$ is not factorial, nor locally factorial.  
\end{theorem}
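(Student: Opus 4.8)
The plan is to attack the three assertions of Theorem~\ref{maintheorem} by rather different techniques, corresponding to the two halves of the title. Throughout, recall that $\mathrm{Hom}(F_g, G) \cong G^g$, so $\mathcal{X}(F_g, G) = G^g \q G$ with $G$ acting by simultaneous conjugation, and in particular its coordinate ring is $\Bbbk[G^g]^G$. The starting point for all three claims is the structure theory of connected reductive $G$: writing $Z = Z(G)^\circ$ for the connected center and $DG$ for the derived subgroup, the multiplication map $Z \times DG \to G$ is a surjective isogeny, and $\mathcal{X}(F_g, G)$ is then a quotient of $Z^g \times \mathcal{X}(F_g, DG)$ — equivalently $(\C^\times)^{gk} \times \mathcal{X}(F_g, DG)$ for $k = \dim Z$ — by a finite abelian group. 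So the problem is reduced to the semisimple case together with bookkeeping of a finite group action.

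For the factoriality statement when $DG$ is simply connected: here $DG$ is a product of simply connected simple factors, and the key input is that a simply connected semisimple group is itself a factorial variety (its coordinate ring is a UFD — this is a classical theorem, essentially because the Picard group and the group of units modulo constants both vanish for such $G$), and that this passes to $G^g$. The plan is then to invoke a theorem in the spirit of Popov/Kac–Watanabe on invariants of reductive groups: if $X$ is a factorial affine variety on which a connected group $H$ acts with $\Bbbk[X]^* = \Bbbk^*$ and $H$ has trivial character group, then $\Bbbk[X]^H$ is again a UFD. When $DG$ is simply connected, $G^g$ has no nonconstant invertible functions only up to the torus directions — so one actually works with $\mathcal{X}(F_g, DG)$ first, gets a UFD, and then handles the torus factors and the finite isogeny kernel by checking that the relevant character groups vanish precisely under the simply-connected hypothesis. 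Factorial plus Cohen–Macaulay (which will follow from the Gorenstein part, or directly since GIT quotients of smooth affine varieties by reductive groups are Cohen–Macaulay by Hochster–Roberts) gives Gorenstein, as noted in the introduction.

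For the Gorenstein statement in full generality (over $\C$): here factoriality can genuinely fail, so a different mechanism is needed. The plan is to use the fact that $G$ is a \emph{smooth} affine variety and the conjugation action has the special feature that the generic stabilizer is the center $Z(G)$, which acts trivially; more importantly, $\mathrm{Hom}(F_g,G) = G^g$ carries a symplectic-type or at least unimodular structure making the GIT quotient have trivial canonical sheaf on its smooth locus. Concretely, one shows the canonical module $\omega_{\Bbbk[G^g]^G}$ is free of rank one: the Hochster–Roberts theorem gives Cohen–Macaulayness, and then one computes $\omega$ as the module of $G$-invariants (or relative invariants twisted by the module of top differentials) of $\omega_{\Bbbk[G^g]} \cong \Bbbk[G^g]$, using that $G^g$ is an open subvariety of affine space with trivial canonical bundle that is moreover $G$-linearized trivially because $G$ is unimodular (a connected reductive group has trivial character, hence its adjoint action on the top exterior power of its Lie algebra is trivial). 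The upshot is an isomorphism $\omega_{\mathcal{X}} \cong \mathcal{O}_{\mathcal{X}}$, i.e.\ Gorenstein. The main obstacle here is making the descent of the canonical module through the GIT quotient precise — one must control the behavior over the non-free locus and the branch divisor of the quotient map $G^g \to \mathcal{X}$, and check that the ramification contributes trivially to the canonical class; this is where the hypothesis $g \geq 1$ and $\dim \mathcal{X}$ large enough (so the singular locus is in codimension $\geq 2$) gets used, reducing the identification of $\omega$ to the smooth locus.

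Finally, for the non-factorial and non-locally-factorial examples when $DG$ is not simply connected: the plan is to produce explicit small cases, e.g.\ $G = \PGL_n(\C)$ or $G = \mathbf{SO}_n(\C)$ with small $g$, and exhibit a Weil divisor that is not Cartier (for non-local-factoriality) or not principal (for non-factoriality). The mechanism is that the isogeny $DG^{sc} \to DG$ has nontrivial kernel $\mu$, a finite abelian group, and $\mathcal{X}(F_g, DG)$ is then the quotient of the factorial variety $\mathcal{X}(F_g, DG^{sc})$ by $\mu$; quotients of factorial varieties by finite groups acquire class group equal to (a subquotient of) the group of characters of $\mu$ that act by reflections, by a Benson/Nakajima-type formula, and one exhibits such a character. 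I expect the descent of the canonical module in the general Gorenstein argument to be the genuine technical heart of the paper; the factorial half and the counterexamples are more a matter of assembling known invariant-theoretic UFD criteria and computing a class group of a finite quotient.
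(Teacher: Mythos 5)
Your proposal for the factoriality part and the counterexamples matches the paper's approach in substance. The paper's Lemma~\ref{liegrouplemma} shows that $G^g$ is factorial precisely when $DG$ is simply connected (in arbitrary characteristic), and then Popov's criterion on invariants of groups with trivial character group (applied to the semisimple group $PG=G/Z(G)$ acting by conjugation on $G^g$) gives factoriality of $\C[G^g]^{PG}$; your bookkeeping via the isogeny $Z\times DG\to G$ and separate handling of torus directions would also work but is more delicate, and the paper's route of feeding $G^g$ directly into Popov's criterion is cleaner. For the negative examples the paper uses Sikora's identification $\C[\mathcal{X}(F_2,\mathrm{PSL}_2(\C))]\cong\C[x,y,z,w]/(xyz-w^2)$ together with a finite-quotient-of-affine-space Picard-triviality argument, which is exactly the style you propose.

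Your plan for the general Gorenstein result, however, diverges sharply from the paper, and there is a real gap you underestimate. You propose to descend the canonical module of $G^g$ through the GIT quotient, using $G$-equivariant triviality of $\omega_{G^g}$ (from unimodularity of $G$). But Gorensteinness does \emph{not} in general descend through GIT quotients even when the upstairs canonical bundle is $G$-equivariantly trivial: the canonical module of $\C[X]^G$ is governed, by Knop's theory, by semi-invariants together with correction terms coming from codimension-one isotropy strata and ramification of $X\to X\q G$, and these corrections need not vanish. Crucially, the case you must handle is exactly when $DG$ is \emph{not} simply connected, i.e., when $G^g$ itself fails to be factorial — this is precisely the regime where divisor-theoretic descent is hardest, and where your ``reduce to the smooth locus and check ramification contributes trivially'' step is the theorem, not a step. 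Two further inaccuracies: the ``symplectic-type'' structure on $G^g$ you invoke belongs to surface-group character varieties, not free-group ones; and ``a connected reductive group has trivial character'' is wrong as stated ($\GL_n$ and tori have many characters) — what you need, and what is true, is only that $\det\circ\mathrm{Ad}$ is trivial.

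The paper sidesteps all of this with an entirely different mechanism: toric degeneration. Using the results of \cite{M14}, $\mathcal{X}(F_g,G)$ degenerates flatly and torus-equivariantly to a normal affine toric variety $X(C_{\Gamma,\mathbf{i}}(G))$, where the semigroup $C_{\Gamma,\mathbf{i}}(G)$ is built by gluing cones $C_{3,\mathbf{i}(v)}(G)$ along a trivalent graph $\Gamma$ with $\beta_1(\Gamma)=g$. Gorensteinness of the toric fiber becomes the combinatorial statement that the interior of the semigroup is a translate; this is proved by first establishing Gorensteinness of the building block $P_3(G)$ via Stanley's Hilbert-series criterion plus a self-dual weight $\lambda_G$ in the root lattice, then gluing. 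Proposition~\ref{deggor} then lifts Gorensteinness from the degenerate fiber back to $\mathcal{X}(F_g,G)$. This degeneration argument is the technical heart of the paper and bears no resemblance to the descent-of-canonical-module argument you envision.
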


We prove the factoriality parts of the above theorem (in arbitrary characteristic) in Section \ref{ufd-sec}.  As an interesting corollary, we give conditions for the Picard group of the character variety to be trivial: $\mathrm{Pic}(\mathcal{X}(F_g,G))=0$ if $DG$ is simply connected.   The proof of the Gorenstein part of Theorem \ref{maintheorem} makes up the rest of the paper.  As the proof uses the representation theory of algebraic groups, in Section 
\ref{gor-sec} we review the relevant ideas and build up the machinery needed for the proof which is in Sections \ref{application-sec} and \ref{gor-proof-sec}.

In the course of our proof of Theorem \ref{maintheorem} we also obtain similar results for the configuration space $P_n(G)$ of $n$ points in the homogeneous variety $G / U$ for $U \subset G$ a maximal unipotent subgroup; see Theorem \ref{pnufdgor}, and Remark \ref{pngor}.  These spaces play a central role in the study of saturation problems which arise from the theory of tensor product invariants in the representation theory of $G$,  see e.g. Section $9$ of \cite{KLM}.

\subsection*{Acknowledgments} 
We thank both Brian Conrad and Vladimir Popov for giving references and comments concerning Lemma \ref{liegrouplemma}; this was extremely helpful.  We also thank Neil Epstein for several useful discussions on factorial rings.  Lawton was partially supported by grants from the Simons Foundation (Collaboration \#245642) and the National Science Foundation (DMS \#1309376).  Additionally, Lawton acknowledges support by the National Science Foundation under Grant No. 0932078000 while in residence at the Mathematical Sciences Research Institute in Berkeley, California, during the Spring 2015 semester.  Finally, we thank an anonymous referee for helpful references.

\section{Factoriality}\label{ufd-sec}

Let $X$ be a normal affine variety over an algebraically closed field $\Bbbk$ of arbitrary characteristic. Then \cite[Proposition 6.2]{hartshorne} shows that the coordinate ring $\Bbbk[X]$ is a unique factorization domain (UFD) if and only if the divisor class group $\mathrm{Cl}(X)=0$. From \cite[Theorems 11.8 and 11.10]{Eisenbud}, we see that the Picard group $\mathrm{Pic}(X)$ injects into $\mathrm{Cl}(X)$, and this injection is an isomorphism if and only if $X$ is locally factorial.

In this generality, a connected semisimple algebraic group $G$ is {\it simply connected} if it does not admit any non-trivial isogeny. The following lemma is implied by the work of Popov \cite{popov}, and is essential to the proof of Theorem \ref{ufd-theorem} below.  

\begin{lemma}\label{liegrouplemma}
Let $G$ be an affine algebraic $\Bbbk$--group, where $\Bbbk$ is an algebraically closed field of arbitrary characteristic.  Let $R_u$ be the unipotent radical of $G$.  Then $G$ is factorial if and only if $G$ is connected and the derived subgroup $[G/R_u,G/R_u]$ is simply connected. 
\end{lemma}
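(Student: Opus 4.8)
The plan is to restate factoriality of $G$ as the vanishing of a divisor class group and then feed this into Popov's computation of the Picard group of a connected linear algebraic group.

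Since a unique factorization domain is in particular an integral domain, if $\Bbbk[G]$ is factorial then $G$ is irreducible, hence connected; so in either direction we may assume $G$ is connected, and it remains to show that for connected $G$ the ring $\Bbbk[G]$ is a UFD if and only if $[G/R_u,G/R_u]$ is simply connected. Over the algebraically closed (hence perfect) field $\Bbbk$ a connected algebraic group is a smooth variety, hence normal, so $\Bbbk[G]$ is a Noetherian normal domain and the criterion recalled above (\cite[Proposition 6.2]{hartshorne}) shows that $\Bbbk[G]$ is a UFD exactly when $\mathrm{Cl}(G)=0$; by smoothness $\mathrm{Cl}(G)\cong\mathrm{Pic}(G)$, so it suffices to determine when $\mathrm{Pic}(G)=0$.

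From here I would run a chain of reductions, each of which is supplied by Popov's work \cite{popov}. The quotient morphism $G\to G/R_u$ is a torsor under the connected split unipotent group $R_u$; since unipotent groups are special, this torsor is Zariski-locally trivial with fibres isomorphic to affine space, whence $\mathrm{Pic}(G)\cong\mathrm{Pic}(G/R_u)$. Writing the connected reductive group $\bar G:=G/R_u$ as the almost direct product of its derived subgroup $H:=[\bar G,\bar G]$ (semisimple) and its radical torus $Z$, Popov's computation of the Picard group of a reductive group gives $\mathrm{Pic}(\bar G)\cong\mathrm{Pic}(H)$, the torus factor contributing nothing because $\Bbbk[Z]$ is a Laurent polynomial ring. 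Finally, for a connected semisimple group $H$ one has $\mathrm{Pic}(H)\cong X^{*}\!\left(\ker(H_{\mathrm{sc}}\to H)\right)$, the character group of the finite fundamental group $\ker(H_{\mathrm{sc}}\to H)$ of $H$, which is of multiplicative type; this vanishes exactly when $\ker(H_{\mathrm{sc}}\to H)$ is trivial, that is, when $H$ admits no nontrivial central isogeny — equivalently, when $H$ is simply connected. Combining the three identifications with $H=[G/R_u,G/R_u]$ proves the lemma.

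The reason the lemma is attributed to Popov rather than dispatched by a soft homotopy argument is the behaviour in positive characteristic, which is where I expect the only real subtleties to lie. The Levi decomposition $G\cong R_u\rtimes L$ may fail, so $\mathrm{Pic}(G)\cong\mathrm{Pic}(G/R_u)$ must be deduced from local triviality of the $R_u$-torsor rather than from a product decomposition of varieties; the isogeny $H\times Z\to\bar G$ has kernel of multiplicative type, which need not be a special group, so the step $\mathrm{Pic}(\bar G)\cong\mathrm{Pic}(H)$ requires genuine descent input rather than a bundle argument; and $\ker(H_{\mathrm{sc}}\to H)$ can be infinitesimal (for instance $\mu_p\subset\SL_p$ in characteristic $p$), so one must invoke that its character group is nonzero whenever the kernel is nontrivial, which holds because the kernel is diagonalizable. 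Each of these points is precisely what Popov's analysis handles, so the proof reduces to assembling his results with the class-group criterion.
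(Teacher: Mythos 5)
Your proof is correct and follows essentially the same chain of reductions as the paper: pass from $\mathrm{Pic}(G)$ to $\mathrm{Pic}(G/R_u)$, then to $\mathrm{Pic}([G/R_u,G/R_u])$, and finally invoke Popov's computation of the Picard group of a connected semisimple group. The only notable difference is that the paper carries out both reductions via Sansuc's exact sequence for Picard groups in a short exact sequence of algebraic groups (\cite[Remark 6.11.3]{sansuc}), whereas you use specialness of split unipotent groups for the first reduction and defer the torus reduction entirely to Popov.
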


\begin{proof}
Since $G$ is smooth, it suffices to compute the Picard group of $G$. If $G$ is not connected $\Bbbk[G]$ is not a domain, so we now assume $G$ is connected.

Now, the natural sequence $1 \to R_u \to G \to G/R_u \to 1$ gives a sequence $\mathrm{X}(R_u) \to\mathrm{Pic}(G/R_u)\to\mathrm{Pic}(G) \to \mathrm{Pic}(R_u)\to 0$ by \cite[Remark 6.11.3]{sansuc}, where $\mathrm{X}(R_u)$ is the character group of the unipotent radical $R_u$.  However, the character group $\mathrm{X}(R_u)$ is trivial since $R_u$ is isomorphic to an affine space by \cite{Grothendieck1958}.
For the same reason $\mathrm{Pic}(R_u)=0$ by \cite[Proposition 6.6]{hartshorne}.  Thus, $\mathrm{Pic}(G/R_u)=\mathrm{Pic}(G)$, where $G/R_u$ is reductive (since its unipotent radical is trivial).  Accordingly, we now assume $G$ is reductive and consider the exact sequence $1 \to DG \to G \to G/DG \to 1$, where $DG=[G,G]$ is the derived subgroup of $G$.  

By the Central Isogeny Theorem, $G$ is isomorphic to $DG \times_F Z$ where $Z$ is the radical of $G$ (the center, a torus) and $F=Z\cap DG$ acts by $f(d,z)=(df,f^{-1}z)$.  Therefore, $G/DG\cong Z/F$.   However, since $G$ is a split reductive group (since $\Bbbk$ is separably closed), $Z$ is a standard torus $(\Bbbk^*)^m$ and therefore $T:=Z/F$ is isomorphic to a standard torus too since quotients of split tori are likewise split tori.

Again by \cite[Remark 6.11.3]{sansuc} we have an exact sequence $\mathrm{Pic}(T)\to\mathrm{Pic}(G)\to \mathrm{Pic}(DG)\to 0$.  However, $\Bbbk[T]$ is a multivariable Laurent ring.  As such it arises via repeated localizations starting from a polynomial ring.  Therefore, $T$ is factorial and $\mathrm{Cl}(T)=\mathrm{Pic}(T)=0$.  We conclude that $\mathrm{Pic}(G)\cong \mathrm{Pic}(DG)$.

Putting these observations together with \cite[Proposition 1]{popov}, 
we conclude that for any affine algebraic $\Bbbk$--group, $\mathrm{Pic}(G)=0$ if and only if $G$ is connected and the derived subgroup $[G/R_u,G/R_u]$ is simply connected.  More generally, from \cite[Corollary, Theorem 6]{popov} or \cite[Proposition 3.6]{Iversen-76}, we have $\mathrm{Pic}(G)=\pi_1([G/R_u,G/R_u])$. 
\end{proof}

\begin{remark}{\rm
One must take care reducing to the semisimple case in Lemma \ref{liegrouplemma}.  For example, letting $G=\GL_n(\C)$, if one considers the quotient by the full radical the semisimple group $\mathrm{P}\GL_n(\C)$ is obtained, which is not simply connected although $G$ is factorial.  However, as expected given the above lemma, its derived subgroup $DG=\SL_n(\C)$ is simply connected.
}\end{remark}

\begin{theorem}\label{ufd-theorem}
Let $G$ be a connected reductive affine algebraic $\Bbbk$--group, where $\Bbbk$ is an algebraically closed field of arbitrary characteristic, and let $F_g$ a rank $g$ free group.  Then $\mathcal{X}(F_g, G)$ is factorial if $DG$ is simply connected.
\end{theorem}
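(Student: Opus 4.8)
The plan is to reduce the factoriality of $\mathcal{X}(F_g,G) = \mathrm{Hom}(F_g,G)\q G = G^g \q G$ to the factoriality of $G$ itself, which is controlled by Lemma~\ref{liegrouplemma}. Since $F_g$ is free, $\mathrm{Hom}(F_g,G) \cong G^g$ with $G$ acting by simultaneous conjugation, so $\Bbbk[\mathcal{X}(F_g,G)] = \Bbbk[G^g]^G$, the ring of invariants. The key structural fact I would exploit is that the conjugation action of $G$ on $G^g$ is \emph{special} in a suitable sense: I would use the fact that $G$ acts on $G^g$ with a dense orbit of \emph{free} (or at least stabilizer-trivial up to the center) points—more precisely, for $g \geq 1$ the generic stabilizer is the center $Z(G)$, and the conjugation action of $G/Z(G)$ is generically free. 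Combined with Popov's criterion (already invoked in Lemma~\ref{liegrouplemma}), this lets one transfer factoriality: a standard result (due to Popov, building on Magid, or the descent-of-$\mathrm{Cl}$ machinery) says that if a connected group $H$ with $\mathrm{Pic}(H)=0$ and $\mathrm{X}(H)=0$ (no nontrivial characters) acts on a factorial variety $Y$, then $\Bbbk[Y]^H$ is again factorial.

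Concretely, the steps I would carry out are: (1) identify $\mathrm{Hom}(F_g,G) = G^g$ and note $G^g$ is factorial when $DG$ is simply connected, since $D(G^g) = (DG)^g$ is simply connected and Lemma~\ref{liegrouplemma} applies; (2) observe that the character group $\mathrm{X}(G)$ need not vanish (e.g. $G$ a torus), so one must instead work with the action of the semisimple-or-adjoint part, or directly check that invariant \emph{units} and invariant \emph{prime divisors} behave well—here the cleanest route is to use the theorem (Popov, "Picard groups of homogeneous spaces..." / Knop–Kraft–Vust style) that for a connected group $G$ with $\mathrm{Cl}(G)$ trivial acting on a factorial affine variety $Y$ with $\Bbbk[Y]^\times = \Bbbk^\times$, one has $\mathrm{Cl}(Y\q G)$ injecting into (a quotient related to) $\mathrm{X}(G)$ coming from the failure of invariant functions to descend; (3) kill that obstruction by showing every $G$-semiinvariant rational function on $G^g$ of character $\chi \in \mathrm{X}(G)$ is actually a genuine invariant times a unit—this uses that $G^g \to G^g \q G$ has the principal divisors lining up because $G$ is connected and the generic fiber is a single orbit of the right dimension; (4) conclude $\mathrm{Cl}(\mathcal{X}(F_g,G)) = 0$, hence the coordinate ring is a UFD by \cite[Proposition 6.2]{hartshorne}.

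The main obstacle I anticipate is step (3): controlling semiinvariants and the possible "extra" class group contributed by the quotient map. The subtlety is exactly the one flagged in the remark after Lemma~\ref{liegrouplemma}—one cannot naively pass to $G/Z(G)$ because that can fail to be simply connected even when $DG$ is—so the argument must be phrased in terms of $DG$ and the radical torus $Z$ separately, using the Central Isogeny decomposition $G \cong DG \times_F Z$. I expect the cleanest implementation is: first prove the result for $G$ semisimple simply connected (where $\mathrm{X}(G) = 0$ and $\mathrm{Pic}(G)=0$, so Popov's transfer theorem applies directly to give $\Bbbk[G^g]^G$ factorial), then handle the torus factor $Z$ (where $Z^g\q Z = Z^g$, manifestly factorial as a localized polynomial ring), and finally assemble the general reductive case via the isogeny $DG \times Z \to G$ and the fact that factoriality is detected by vanishing of $\mathrm{Cl}$, which is compatible with the relevant finite quotient since $F = Z \cap DG$ acts appropriately. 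A secondary technical point is verifying the hypotheses of the invariant-theory transfer result in arbitrary characteristic, which is why the paper restricts the Gorenstein half to $\Bbbk = \C$ but can keep the factoriality half characteristic-free—Popov's results are stated in that generality.
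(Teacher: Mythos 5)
Your starting point agrees with the paper: identify $\mathrm{Hom}(F_g,G)\cong G^g$, note that $G^g$ is factorial by Lemma~\ref{liegrouplemma} because $D(G^g)=(DG)^g$ is simply connected, and invoke a Popov-type transfer theorem to pass factoriality to the invariant ring. But you have misremembered the hypotheses of that transfer theorem, and the detour this forces you into contains a genuine gap.

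The paper uses \cite[Remark 3, p.~376]{Pop72}, which requires only that the acting group be connected with \emph{trivial character group}, and that the variety acted on be factorial; it does \emph{not} require $\mathrm{Pic}$ of the acting group to vanish. The conjugation action of $G$ on $G^g$ factors through $PG:=G/Z(G)$, which is semisimple and hence has $\mathrm{Hom}(PG,\Bbbk^\times)=0$. That is all one needs: $\Bbbk[G^g]^{PG}$ is a UFD because $G^g$ is factorial and $PG$ has no nontrivial characters. In particular, the worry you flag — that $G/Z(G)$ can fail to be simply connected even when $DG$ is — is a red herring here. The simply-connectedness of $DG$ is used only to get factoriality of the \emph{variety} $G^g$ (via Lemma~\ref{liegrouplemma}); it is never required of the \emph{acting} group $PG$, for which only the character group matters. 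By insisting on $\mathrm{Pic}(H)=0$ as well (false for $PG$ in general, e.g.\ $\mathrm{Pic}(\mathrm{PGL}_n)=\Z/n\Z$), you block the direct application and are pushed into the decomposition $G\cong(DG\times Z)/F$.

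That decomposition route is where the real gap lives. Your ``assembly'' step asserts that factoriality ``is compatible with the relevant finite quotient since $F=Z\cap DG$ acts appropriately,'' but taking invariants or quotients by a finite group does \emph{not} preserve factoriality in general — the paper's own $\mathcal{X}(F_2,\mathrm{PSL}_2(\C))$ example, which is a finite quotient of $\C^3$ and fails to be even locally factorial, shows exactly this failure mode. Without a genuine argument controlling how $\mathrm{Cl}$ behaves under the $F$-quotient (for instance, tracking the character group of $F$ and the ramification of the cover), this step does not close. The fix is simply to drop the spurious $\mathrm{Pic}(H)=0$ hypothesis and apply Popov's transfer theorem directly to the $PG$-action on $G^g$, as the paper does.
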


\begin{proof}
The character variety $\mathcal{X}(F_g,G)$ is the GIT quotient of $\mathrm{Hom}(F_g,G)\cong G^g$ by the conjugation action of $PG:=G/Z(G)$.  The character group $\mathrm{Hom}(PG,\Bbbk^*)$ is trivial since $PG$ is semisimple, and so the ring of invariants $\Bbbk[G^g]^{PG}$ is a UFD by \cite{Pop72}[Remark 3, Page 376], since $G^g$ is factorial by the previous Lemma \ref{liegrouplemma}.  We note that to apply Lemma \ref{liegrouplemma} observe that $G^g$ is a reductive group whose derived subgroup is $DG^g,$ and the latter is simply connected since $DG$ is simply connected by assumption.
\end{proof}

In \cite{KuNa} the Picard group of the moduli space of semistable principal $G$-bundles over a closed surface of genus at least 2 is shown to be $\Z$, when $G$ is a simply connected simple complex algebraic group.  They also show this moduli space is Gorenstein.  The next two corollaries may be considered analogues to these results.

\begin{corollary}
Let $G$ be as in Theorem \ref{ufd-theorem}. Then
$\mathrm{Pic}(\mathcal{X}(F_g,G))=0$ if $DG$ is simply connected.
\end{corollary}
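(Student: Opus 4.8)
The plan is to deduce this corollary directly from Theorem~\ref{ufd-theorem} together with the general facts about divisor class groups and Picard groups recalled at the start of Section~\ref{ufd-sec}. First I would note that since $G$ is smooth, the quotient $\mathcal{X}(F_g,G)$ is normal: it is the GIT quotient of the smooth (hence normal) variety $G^g$ by a reductive group, and a categorical quotient of a normal affine variety by a reductive group is normal (the invariant ring of a normal domain is integrally closed in its fraction field). Thus $\Bbbk[\mathcal{X}(F_g,G)]$ is a normal domain, and Theorem~\ref{ufd-theorem} tells us it is a UFD when $DG$ is simply connected.

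Next I would invoke \cite[Proposition 6.2]{hartshorne}, quoted above: for a normal affine variety $X$, the coordinate ring is a UFD if and only if $\mathrm{Cl}(X)=0$. Applying this to $X=\mathcal{X}(F_g,G)$ gives $\mathrm{Cl}(\mathcal{X}(F_g,G))=0$ under the simply connected hypothesis. Finally, by \cite[Theorems 11.8 and 11.10]{Eisenbud} (again quoted above) the Picard group $\mathrm{Pic}(X)$ injects into $\mathrm{Cl}(X)$, so $\mathrm{Cl}(\mathcal{X}(F_g,G))=0$ forces $\mathrm{Pic}(\mathcal{X}(F_g,G))=0$. This chain of implications is the entire argument.

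There is essentially no main obstacle here, since all the required inputs are already available in the excerpt; the only point that deserves a sentence of care is the normality of $\mathcal{X}(F_g,G)$, which is needed before \cite[Proposition 6.2]{hartshorne} and the Picard-to-class-group injection can be applied. Normality follows because $G^g$ is smooth, hence normal, and the ring of invariants of a normal ring under a reductive group action is normal; alternatively, one can observe that a UFD is automatically normal, so once Theorem~\ref{ufd-theorem} is in hand, normality is free and \cite[Theorems 11.8 and 11.10]{Eisenbud} apply directly. Either way, the corollary is immediate.
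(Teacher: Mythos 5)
Your proof is correct and follows the paper's argument exactly: use Theorem~\ref{ufd-theorem} to get the UFD property, deduce that the class group vanishes, then conclude via the injection $\mathrm{Pic}\hookrightarrow\mathrm{Cl}$. The only difference is that you spell out the normality check needed to invoke the class-group machinery, which the paper leaves implicit; this is a welcome bit of extra care but not a different approach.
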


\begin{proof}
In Theorem \ref{ufd-theorem}, we see that the coordinate ring $\Bbbk[\mathcal{X}(F_g,G)]$ is a UFD and thus its class group is trivial.  However, $\mathrm{Pic}(\mathcal{X}(F_g,G))$ injects into the class group, and so is likewise trivial.
\end{proof}

\begin{corollary}
Let $G$ be as in Theorem \ref{ufd-theorem}. Then
$\mathcal{X}(F_g,G)$ is Gorenstein if $DG$ is simply connected.
\end{corollary}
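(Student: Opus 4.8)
The plan is to deduce this Gorenstein statement as an immediate consequence of the factoriality result in Theorem~\ref{ufd-theorem}, using the standard homological fact that a Cohen--Macaulay unique factorization domain is Gorenstein (see \cite{hartshorne}, or Murthy's theorem: a Cohen--Macaulay UFD is Gorenstein). So the real content is reduced to establishing that $\Bbbk[\mathcal{X}(F_g,G)]$ is Cohen--Macaulay; once we have that, Theorem~\ref{ufd-theorem} gives the UFD property when $DG$ is simply connected, and the two together force Gorenstein.

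First I would recall that $\mathcal{X}(F_g,G)$ is the GIT quotient $G^g \q PG$, i.e. its coordinate ring is the ring of invariants $\Bbbk[G^g]^{PG}$ where $PG = G/Z(G)$ is semisimple and hence (linearly) reductive in characteristic zero. Since $G^g$ is a smooth affine variety, its coordinate ring $\Bbbk[G^g]$ is regular, hence Cohen--Macaulay. The key input is then the Hochster--Roberts theorem: the ring of invariants of a linearly reductive group acting on a (polynomial, or more generally regular/Cohen--Macaulay) ring is Cohen--Macaulay. Applying Hochster--Roberts to $\Bbbk[G^g]^{PG}$ yields that $\Bbbk[\mathcal{X}(F_g,G)]$ is Cohen--Macaulay. (Here one uses $\Bbbk=\C$, which is the hypothesis of the corollary, so linear reductivity and Hochster--Roberts are available.)

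Combining the two facts: by Theorem~\ref{ufd-theorem}, when $DG$ is simply connected the ring $\Bbbk[\mathcal{X}(F_g,G)]$ is a UFD; by the previous paragraph it is Cohen--Macaulay; and a Cohen--Macaulay UFD is Gorenstein. Hence $\mathcal{X}(F_g,G)$ is Gorenstein, proving the corollary.

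I do not expect a serious obstacle here, since every ingredient is a named classical theorem; the only care needed is to make sure the hypotheses line up --- in particular that $PG$ is linearly reductive (true over $\C$, which is assumed), and that the ``Cohen--Macaulay UFD implies Gorenstein'' implication is cited in the generality needed (it holds for any Noetherian local, hence for the localizations of a finitely generated $\C$-algebra, and the Gorenstein property is local). One alternative phrasing, if one prefers to avoid invoking Hochster--Roberts directly in the corollary, is simply to remark that the general (unconditional) Gorenstein assertion of Theorem~\ref{maintheorem}, proved later in the paper, already contains this case; but the short homological argument above is cleaner and self-contained given Theorem~\ref{ufd-theorem}.
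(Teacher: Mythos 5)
Your proposal matches the paper's proof essentially verbatim: both invoke Murthy's theorem that a Cohen--Macaulay UFD is Gorenstein, obtain Cohen--Macaulayness from Hochster--Roberts applied to the $PG$-action on the smooth affine variety $G^g$, and obtain the UFD property from Theorem~\ref{ufd-theorem}. The only extra care you add --- noting the (linear) reductivity hypothesis for Hochster--Roberts --- is a reasonable remark but is implicit in the paper's citation.
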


\begin{proof}
By a theorem of Murthy \cite{Murthy}, it suffices to show that the coordinate ring $\Bbbk[\mathcal{X}(F_g, G)]$ is Cohen-Macaulay and a UFD.  The former is a consequence of a theorem of Hochster and Roberts \cite{HR}, and the latter was shown above in Theorem \ref{ufd-theorem}.
\end{proof}

Unfortunately, when $DG$ is not simply connected, $\mathcal{X}(F_g, G)$ need not be even locally factorial over $\C$, as the next example shows.  Nevertheless, in the sequel we show that $\mathcal{X}(F_g, G)$ is Gorenstein in general (over $\C$).

\begin{example}{\rm
By \cite[Corollary 6]{Sikora-SO}, the irreducible coordinate ring of $\mathcal{X}(F_2,\mathrm{PSL}_2(\C))$ is isomorphic to $\C[x,y,z,w]/\langle xyz-w^2\rangle$.  Therefore, it is certainly not a UFD.  However, this example shows that character varieties are not generally even locally factorial.  A normal affine variety $V$ is local factorial if and only if $\mathrm{Pic}(V)=\mathrm{Cl}(V)$, by \cite[Theorems 11.8 and 11.10]{Eisenbud}.  However, $\mathcal{X}(F_2,\mathrm{PSL}_2(\C))$ is normal since it is the GIT quotient of a smooth $($and hence normal$)$ variety by a reductive group.   Since $\mathcal{X}(F_2,\mathrm{PSL}_2(\C))$ is not factorial, we conclude that its class group is non-trivial; but its Picard group is trivial as we now explain.  $\mathcal{X}(F_2,\mathrm{PSL}_2(\C))\cong \mathcal{X}(F_2,\mathrm{SL}_2(\C))\q (\mathbb{Z}/2\mathbb{Z})^2$, and $\mathcal{X}(F_2,\mathrm{SL}_2(\C))\cong \C^3$ by results of Vogt \cite{Vo}.  Since it is a finite quotient of an affine space, we conclude that its Picard group is trivial $($see \cite{Kang}$)$. Thus, $\mathcal{X}(F_2,\mathrm{PSL}_2(\C))$ is not locally factorial.  Along similar lines, Example 7 in \cite{Sikora-SO} also shows that $\mathcal{X}(F_2,\mathrm{SO}_4(\C))$ is not locally factorial.}
\end{example}

\begin{example}\label{typeA}{\rm
Since $\X(F_g,\SL_n(\C))$ is the GIT quotient of a smooth variety by a reductive group, the coordinate ring $\C[\X(F_g,\SL_n(\C))]$ is integrally closed, Cohen-Macaulay, and has rational singularities.  By \cite[Theorem 1.1]{FlLa3} $\C[\X(F_g,\SL_n(\C))]$ is regular if and only if $(g,n)=(1,n),(g,1),$ or $(2,2)$, and as mentioned in \cite[Remark 4.6]{FlLa3}, \cite{LT} implies that $\C[\X(F_g,\SL_n(\C))]$ is a complete intersection if and only if $(g,n)=(1,n),(g,1),(2,2),(2,3),$ or $(3,2)$.  In these cases the isomorphism type of the rings is also known; see \cite[Section 2.2]{FlLa3}.  We have now further established that $\C[\X(F_g,\SL_n(\C))]$ has trivial Picard group, is a UFD, and hence is Gorenstein.}

\end{example}

\begin{conjecture}
With the preceding examples in mind, we conjecture that $\mathcal{X}(F_g, G)$ is factorial if and only if $DG$ is simply connected.
\end{conjecture}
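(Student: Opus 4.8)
The plan is to prove the remaining (``only if'') implication, the ``if'' implication being Theorem~\ref{ufd-theorem}. First I would point out that the statement needs $g\geqslant 2$: for $g=1$ one has $\mathcal{X}(F_1,\PGL_2(\C))\cong\PGL_2(\C)\q\PGL_2(\C)\cong\mathbb{A}^1$, which is factorial although $\PGL_2(\C)$ is not simply connected. So assume $g\geqslant 2$ and $DG$ not simply connected, and aim to show $\mathcal{X}(F_g,G)$ is not factorial. The first step is to reduce to a finite quotient of a factorial variety. Let $Z$ be the radical of $G$, let $\widetilde{DG}\to DG$ be the universal cover, and put $\tilde G:=\widetilde{DG}\times Z$; this is connected reductive with $D\tilde G=\widetilde{DG}$ simply connected, and the natural map $\tilde G\to G$ is a surjective central isogeny whose kernel $K$, a finite central subgroup, contains $\pi_1(DG)$, hence is nontrivial. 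The conjugation actions on $\tilde G^{\,g}$ and on $G^g$ both factor through the common group $PG=G/Z(G)=P\tilde G$, while $K^g$ acts on $\tilde G^{\,g}$ by central translation, commuting with $PG$; since $K^g$ is finite and $PG$ reductive, taking invariants in either order gives
\[
\mathcal{X}(F_g,G)\;=\;G^g\q PG\;\cong\;\bigl(\tilde G^{\,g}\q PG\bigr)\big/K^g\;=\;\mathcal{X}(F_g,\tilde G)\big/K^g.
\]
By Theorem~\ref{ufd-theorem}, $Y:=\mathcal{X}(F_g,\tilde G)$ is factorial and $\Gamma:=K^g\neq 1$, so it suffices to show that $Y/\Gamma$ is not factorial.

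Next I would invoke the classical description of the divisor class group of a ring of invariants under a finite group (Samuel; see Fossum's monograph on divisor class groups of Krull domains). Since $\mathrm{Cl}(Y)=0$, this gives $\mathrm{Cl}(Y/\Gamma)\cong\mathrm{Hom}(\Gamma,\C^*)/R'$, where $R'$ is the subgroup attached to the pseudo-reflections of $\Gamma$ on $Y$ -- the elements of $\Gamma$ fixing a prime divisor of $Y$ pointwise. The units do not interfere: the nonconstant units of $\C[Y]$ come from characters of $\tilde G$, i.e.\ of $Z$, and since $K\subset\widetilde{DG}$ every such character kills $K$, so the relevant connecting map $\C[Y]^*/\C^*\to\mathrm{Hom}(\Gamma,\C^*)$ vanishes. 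Thus $\mathrm{Cl}(Y/\Gamma)\cong\mathrm{Hom}(\Gamma/R,\C^*)$ with $R\leqslant\Gamma$ generated by the pseudo-reflections, and it is enough to prove $R\subsetneq\Gamma$; I would aim for $R=1$.

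The crux is then to show that no $\kappa=(k_1,\dots,k_g)\in K^g\setminus\{1\}$ fixes a prime divisor of $Y=\mathcal{X}(F_g,\tilde G)$ pointwise. On the stable (irreducible) locus $Y^s$, which is dense and open for $g\geqslant 2$, the class of $\rho=(\tilde g_1,\dots,\tilde g_g)$ is $\kappa$-fixed precisely when there is $h\in PG$ with $h\tilde g_ih^{-1}\tilde g_i^{-1}=k_i$ for all $i$; then $h^{\mathrm{ord}(\kappa)}$ centralises the irreducible $\rho$, so $h$ is a nontrivial torsion -- hence semisimple, hence noncentral -- element of $PG$, and $C_{PG}(h)$ has codimension at least $2$. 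For a fixed such $h$ the admissible $\rho$ form a product of cosets of $C_{\tilde G}(h)$, and such $h$ range over finitely many conjugacy classes, so a dimension count through the quotient shows that the $\kappa$-fixed locus in $Y^s$ has codimension at least $2(g-1)\geqslant 2$ in $Y$, hence contains no divisor. It then remains to rule out a $\kappa$-fixed divisor inside the complement of $Y^s$ (the reducible representations): there one replaces $\tilde G$ by a Levi subgroup $L$ -- which contains $K$ and again has simply connected derived subgroup -- and iterates the estimate, the only delicate case being $g=2$, where the reducible locus can itself be a divisor, as $\{\mathrm{tr}[A,B]=2\}$ is in $\mathcal{X}(F_2,\SL_2(\C))\cong\C^3$, and there one checks directly that $\kappa$ merely permutes that divisor. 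This last point -- the fixed-locus dimension estimate on the (for large $g$) \emph{singular} variety $\mathcal{X}(F_g,\tilde G)$, along with the reducible stratum and the borderline case $g=2$ -- is the step I expect to be the real obstacle; the reduction above and the class-group computation are formal. Once $R=1$ is in hand, the preceding yields $\mathrm{Cl}(\mathcal{X}(F_g,G))\cong\mathrm{Hom}(K^g,\C^*)\neq 0$, so $\mathcal{X}(F_g,G)$ is not factorial, which establishes the converse for $g\geqslant 2$.
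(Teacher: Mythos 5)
The paper does not prove this statement---it is posed explicitly as an open conjecture, and the remark following it in the paper proposes a quite different route, namely to show that the GIT quotient inherits non-factoriality from the non-factorial representation variety $\mathrm{Hom}(F_g,G)\cong G^g$. Your $g=1$ observation is a genuine and valuable catch: $\mathcal{X}(F_1,\PGL_2(\C))\cong T/W\cong\mathbb{A}^1$ is factorial although $\PGL_2(\C)$ is not simply connected, so the conjecture as literally stated (and, a fortiori, the authors' suggested line of attack) is false; any correct version must restrict to $g\geqslant 2$.

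For $g\geqslant 2$ the plan based on Samuel--Fossum descent for divisor class groups, applied to the finite cover $\mathcal{X}(F_g,\tilde G)\to\mathcal{X}(F_g,G)\cong\mathcal{X}(F_g,\tilde G)/K^g$, is a sensible and genuinely different approach from the one the paper hints at, and the reduction to the factorial cover $Y=\mathcal{X}(F_g,\tilde G)$ via Theorem \ref{ufd-theorem} and commutation of $PG$- and $K^g$-invariants is sound. There are, however, two issues. First, the assertion ``since $K\subset\widetilde{DG}$'' is false in general: with $\tilde G=\widetilde{DG}\times Z$ and $\pi:\widetilde{DG}\to DG$, one has $K\cong\pi^{-1}(DG\cap Z)$, which projects onto $DG\cap Z\subset Z$; this projection can be nontrivial while $\pi_1(DG)\neq 1$ (take $G=\GL_4/\mu_2$, where $DG=\SL_4/\mu_2$ is not simply connected and $DG\cap Z\cong\mu_4$). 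So characters of $Z$ need not kill $K$, and the connecting map $\C[Y]^*/\C^*\to\mathrm{Hom}(K^g,\C^*)$ need not vanish. The conclusion can be salvaged---the cokernel of that map still has order $|\pi_1(DG)|^g>1$---but the step as written is incorrect and needs that repair. Second, and more seriously, the entire class-group formula you invoke requires that no nontrivial $\kappa\in K^g$ fix a prime divisor of $Y$ pointwise, i.e.\ that $Y\to Y/K^g$ be \'{e}tale in codimension one; a divisorial fixed locus would introduce ramification that can absorb the character-group obstruction. You identify this yourself as the real obstacle, and the fixed-locus dimension count on the singular variety $Y$, the treatment of the reducible stratum via Levi subgroups, and the borderline case $g=2$ are all left unverified. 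The proposal is thus an outline with an honestly flagged hard core, not a proof---an entirely appropriate state of affairs for a conjecture, but not a resolution of it.
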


\begin{remark}{\rm
We have established half of this conjecture here, and showed that $\mathrm{Hom}(F_g,G)$ is factorial if and only if $DG$ is simply connected.  It remains to prove that the GIT quotient of  $\mathrm{Hom}(F_g,G)$ cannot be factorial since  $\mathrm{Hom}(F_g,G)$ itself is not. }
\end{remark}

\section{Gorenstein graded algebras and semigroup algebras}\label{gor-sec}

For the remainder of the paper we work over $\C$.  A Noetherian ring $R$ is said to be Gorenstein if every localization $R_m$ at a maximal ideal $m \subset R$ has finite injective dimension as a module over itself: $inj_{R_m}(R_m) < \infty$ (see \cite[Definition 3.1.18]{BH}). Gorenstein algebras are also Cohen-Macaulay, and in all the cases we consider here, they can be characterized in the class of Cohen-Macaulay rings by the behavior of an associated object called the canonical module $\Omega(R).$  When $R$ is a homomorphic image of a polynomial ring $S = \C[x_1, \ldots, x_n]$ (this is the case for all algebras we study in this paper), one can define the canonical module as $Ext^{n-dim(R)}_S(R, S)$. This module agrees with the module of top differential forms of $R$ on the non-singular locus of $Spec(R)$; the $\C-$algebra $R$ is then Gorenstein when $\Omega(R)$ is a principal module isomorphic to $R$ (see \cite[Theorem 3.3.7]{BH}). The module $\Omega(R)$ was introduced by Grothendieck to study duality theory for singular varieties, in particular when $R$ is Gorenstein the Betti numbers of $R$ as an $S-$module satisfy $\beta^S_i(R) = \beta^S_{n-dim(R)-i}(R)$.  An illustrative example of a Gorenstein algebra is $S$ itself, where $\Omega(S)$ is generated by the top form $dx_1 \wedge \ldots \wedge dx_n$.

It is a recurring theme in commutative algebra that algebraic properties are often easier to establish for rings with gradings, this is also the case with the Gorenstein property.   In the presence of a positive multigrading, a theorem of Stanley (see \ref{stanley} below) establishes that the Gorenstein property is characterized by features of the Hilbert function, when the ring in question is a Cohen-Macaulay Noetherian domain.  Normal affine semigroup algebras provide an extreme special case of multigraded algebras, in this case the Gorenstein property can be proved using the combinatorics of the associated semigroup.  In this section we will recall these two results, and indicate how degeneration techniques allow us to use these powerful methods on more general classes of algebras which may not have a grading, in particular the coordinate rings of character varieties.

\subsection{Gorenstein graded algebras}

 Let $R = \bigoplus_{\lambda \in \Z_{\geqslant 0}^n} R_{\lambda}$ be a multigraded Noetherian domain with $\C = R_0.$  Let $\Z[[\mathbf{t}]]$ be the ring of formal power series in the monomials $\mathbf{t}^{\lambda} = t_1^{\lambda_1}\cdots t_n^{\lambda_n}$, $\lambda \in \Z_{\geqslant 0}^n$.  Recall that the Hilbert series of $R$ is the following element of $\Z[[\mathbf{t}]]$:

\begin{equation}
H(R, \mathbf{t}) = \sum_{\lambda \in \Z_{\geqslant 0}^n} dim(R_{\lambda})\mathbf{t}^{\lambda}
\end{equation}

\noindent
 With these hypotheses, the series $H(R, \mathbf{t})$ is always the expansion of a rational function (see \cite[Theorem 2.3]{stanleybook}):

\begin{equation}
H(R, \mathbf{t}) = \frac{\mathbf{t}^{\beta}P(\mathbf{t})}{\prod (1-\mathbf{t}^{\alpha_i})}
\end{equation}

The weights $\alpha_i$ appearing in the denominator of this function can be taken to be the multidegrees of a set of homogeneous generators of $R.$   The following theorem of Stanley (see \cite[Theorem 12.7]{stanleybook} and \cite[Theorem 6.1]{stanley}) gives a characterization of the Gorenstein property in terms of $H(R, \mathbf{t})$.

\begin{theorem}[Stanley]\label{stanley}
Let $R$ be a $\Z_{\geqslant 0}^n$--multigraded Cohen-Macaulay Noetherian domain, then $R$ is Gorenstein if and only if the following holds for some $\omega \in \Z^n$:

\begin{equation}
H(R, \mathbf{t}^{-1}) = (-1)^d\mathbf{t}^{\omega}H(R, \mathbf{t}).
\end{equation}

Furthermore, the weight $\omega$ is the multidegree of the generator of the unique $\Z^n-$ multigraded canonical module of $R.$

\end{theorem}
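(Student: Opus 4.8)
The plan is to read the functional equation off the canonical module $\Omega(R)$ via graded local duality, using the hypothesis that $R$ is a domain only at the one step where an equality of Hilbert functions must be promoted to an isomorphism of modules.

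First I would record two standard facts about a finitely generated, positively $\Z_{\geqslant 0}^n$--graded $\C$--algebra $R$ with $R_0=\C$ that is Cohen--Macaulay of Krull dimension $d$ (see \cite{BH}): (i) the canonical module $\Omega(R)$ is a finitely generated $\Z^n$--graded maximal Cohen--Macaulay $R$--module, unique up to graded isomorphism and degree shift, and $R$ is Gorenstein precisely when $\Omega(R)$ is free of rank one, in which case it is generated by a single homogeneous element whose multidegree we call $\omega\in\Z^n$; and (ii) graded local duality yields the Hilbert--series identity $H(\Omega(R),\mathbf{t})=(-1)^d H(R,\mathbf{t}^{-1})$. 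Granting these, the forward implication is immediate: if $R$ is Gorenstein then $\Omega(R)=R\cdot\xi$ with $\deg\xi=\omega$, so $H(\Omega(R),\mathbf{t})=\mathbf{t}^{\omega}H(R,\mathbf{t})$, and comparing with (ii) gives $(-1)^d H(R,\mathbf{t}^{-1})=\mathbf{t}^{\omega}H(R,\mathbf{t})$, which is the asserted equation; the identification of $\omega$ with the multidegree of the canonical generator is then part of this computation.

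For the converse, suppose $H(R,\mathbf{t}^{-1})=(-1)^d\mathbf{t}^{\omega}H(R,\mathbf{t})$; by (ii) this forces $H(\Omega(R),\mathbf{t})=\mathbf{t}^{\omega}H(R,\mathbf{t})$. Here I would invoke that $R$ is a domain: since $\Omega(R)$ is maximal Cohen--Macaulay and its associated primes are the minimal primes of maximal dimension, over a domain $\Omega(R)$ is torsion--free of rank one, hence after clearing a homogeneous denominator it admits a graded embedding onto a nonzero homogeneous ideal $I\subseteq R$. Comparing Hilbert series, $H(I,\mathbf{t})=\mathbf{t}^{c}H(R,\mathbf{t})$ for some $c\in\Z^n$; since every exponent occurring in $H(R,\mathbf{t})$ lies in $\Z_{\geqslant 0}^n$ and the exponent $0$ occurs with coefficient $\dim_\C R_0=1$, comparing the $\mathbf{t}^{c}$--coefficients shows $c\in\Z_{\geqslant 0}^n$ and $\dim_\C I_c=1$. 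Pick $0\neq f\in I_c$; as $R$ is a domain, $f$ is a nonzerodivisor, so the multiplication map $R\to fR$, $r\mapsto fr$, is a graded isomorphism of degree $c$, whence $H(fR,\mathbf{t})=\mathbf{t}^{c}H(R,\mathbf{t})=H(I,\mathbf{t})$. Since $fR\subseteq I$ are graded submodules with equal Hilbert functions, $fR=I$; thus $\Omega(R)$ is isomorphic, up to shift, to the free module $fR$, so $\Omega(R)$ is free of rank one and $R$ is Gorenstein.

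The step I expect to be the crux — and the place where the domain hypothesis is genuinely needed — is the converse. The equality $H(\Omega(R),\mathbf{t})=\mathbf{t}^{\omega}H(R,\mathbf{t})$ only says that $\Omega(R)$ and a shift of $R$ have the same Hilbert function, which does not by itself force an isomorphism; indeed, reducing modulo a homogeneous system of parameters to the Artinian case would discard the domain property and leave a statement that is false in general. The domain hypothesis repairs this by exhibiting the rank--one module $\Omega(R)$ as an actual ideal, which is what lets the equality of Hilbert functions be upgraded, via the containment-plus-equal-dimensions argument, to $\Omega(R)\cong R$ up to shift. I would also take care to state graded local duality with sign and shift conventions consistent with the normalization of $\Omega(R)$ fixed in Section \ref{gor-sec} (agreement with top differential forms on the nonsingular locus), so that the $\omega$ appearing in the theorem is unambiguously the multidegree of the canonical generator.
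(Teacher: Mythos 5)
The paper does not prove this theorem; it is quoted verbatim as a result of Stanley with citations to \cite[Theorem~12.7]{stanleybook} and \cite[Theorem~6.1]{stanley}, so there is no in-paper argument to compare against. Judged on its own terms, your proof is correct and is essentially the standard modern derivation: (i) use graded local duality to get $H(\Omega(R),\mathbf{t})=(-1)^dH(R,\mathbf{t}^{-1})$, (ii) observe that Gorenstein is equivalent to $\Omega(R)$ being a shift of $R$, which handles the forward direction and identifies $\omega$, and (iii) for the converse, use the domain hypothesis to realize $\Omega(R)$ (torsion-free of rank one since $\mathrm{Ass}(\Omega(R))=\mathrm{Min}(R)$) as a graded ideal $I$, pick a nonzero $f\in I$ of the minimal degree $c$, and compare $fR\subseteq I$ against the forced equality of Hilbert functions to get $I=fR$. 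Your diagnosis of where the domain hypothesis is indispensable is exactly right: the Hilbert-series identity alone never upgrades to a module isomorphism (Artinian reduction kills the domain property and the statement genuinely fails for general Artinian CM graded algebras with symmetric $h$-vector), and it is the rank-one torsion-free structure that makes the ideal-plus-degree-count argument close. Stanley's original proof is phrased via a Noether normalization $S\hookrightarrow R$ and the module $\mathrm{Hom}_S(R,S)$, which is the same canonical module in different clothing, so you are not departing from the cited route in any essential way. One small thing worth tightening if this were to appear in print: when you say $\Omega(R)$ ``embeds onto'' an ideal after clearing denominators, you should make explicit that the embedding and the shift $c$ can be chosen homogeneous with respect to the $\Z^n$-grading (work in the graded total quotient ring and choose a homogeneous generator of the rank-one free module $\Omega(R)\otimes_R K^{\mathrm{gr}}$), since the subsequent coefficient comparison relies on $I$ being a $\Z^n$-graded ideal.
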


A matrix $\delta \in \GL_n(\Z)$ acts on $\Z[[\mathbf{t}]]$ by permuting the monomials $\mathbf{t}^{\alpha}$, this action is compatible with rational expansions. 
Let $\Delta \subset \GL_n(\Z)$ by a finite group of such matrices which fix the Hilbert function $H(R, \mathbf{t})$.  Suppose $R$ has a homogeneous generating set of degrees $A = \{\alpha_1, \ldots, \alpha_k\}$, then letting $\Delta\circ A = \{ \alpha = \delta \circ \alpha_i | \alpha_i \in A, \delta \in \Delta\}$, any homogeneous basis of $\bigoplus_{\alpha \in \Delta \circ A} R_{\alpha}$ likewise generates $R$.  
As this set is still finite, it follows that we may choose a rational expression for $H(R, \mathbf{t})$ with denominator fixed by the action of $\Delta$.  The rational expressions for  $H(R, \mathbf{t})$ and $H(R, \mathbf{t}^{-1})$ are therefore left unchanged by the action of any $\delta \in \Delta$. The following proposition is a consequence of this observation. 

\begin{proposition}\label{symmetricgenerator}
Let $R$ be a $\Z_{\geqslant 0}^n$--graded Gorenstein Noetherian domain, and let  $\Delta$ be a finite group of $\Z-$linear automorphisms which fix $H(R, \mathbf{t}),$ then the multidegree $\omega$ of the generator of the canonical module of $R$ is preserved by each $\delta \in \Delta$.
\end{proposition}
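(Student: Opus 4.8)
The plan is to deduce everything from Stanley's theorem (Theorem \ref{stanley}) together with the remarks immediately preceding the proposition about the $\Delta$-action on Hilbert series. First I would record the hypothesis in usable form: since $R$ is $\Z_{\geqslant 0}^n$-graded Gorenstein Noetherian domain, it is in particular Cohen–Macaulay, so Stanley's theorem applies and there is a unique $\omega \in \Z^n$ with $H(R, \mathbf{t}^{-1}) = (-1)^d \mathbf{t}^\omega H(R, \mathbf{t})$, where $d = \dim(R)$, and $\omega$ is the multidegree of the generator of the $\Z^n$-multigraded canonical module. The content of the proposition is that this $\omega$ is fixed by every $\delta \in \Delta$.

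The key step is to apply an arbitrary $\delta \in \Delta$ to both sides of the functional equation. The action of $\delta$ on $\Z[[\mathbf{t}]]$ permutes the monomials $\mathbf{t}^\alpha$ and is compatible with rational expansions, as noted before the proposition; it sends $\mathbf{t}^\alpha \mapsto \mathbf{t}^{\delta \circ \alpha}$. Since $\delta \circ (-\alpha) = -(\delta\circ\alpha)$, the operation $\mathbf{t} \mapsto \mathbf{t}^{-1}$ commutes with the $\delta$-action, so applying $\delta$ to the left side of Stanley's identity yields $\delta \cdot H(R, \mathbf{t}^{-1}) = H(R, \mathbf{t}^{-1})$ by hypothesis, and applying $\delta$ to the right side yields $(-1)^d \mathbf{t}^{\delta\circ\omega}\, \delta\cdot H(R, \mathbf{t}) = (-1)^d \mathbf{t}^{\delta\circ\omega} H(R, \mathbf{t})$, again using that $\Delta$ fixes $H(R,\mathbf{t})$. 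Comparing, we get $H(R, \mathbf{t}^{-1}) = (-1)^d \mathbf{t}^{\delta\circ\omega} H(R, \mathbf{t})$, so $\delta\circ\omega$ also satisfies the defining equation of Stanley's theorem.

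The final step is the uniqueness clause of Theorem \ref{stanley}: the weight $\omega$ satisfying the functional equation is uniquely determined (it is the multidegree of the generator of the canonical module, which is canonically attached to $R$). Hence $\delta\circ\omega = \omega$ for every $\delta \in \Delta$, which is exactly the assertion. I do not anticipate a serious obstacle here; the only point requiring a little care is justifying that the $\delta$-action genuinely commutes with $\mathbf{t}\mapsto\mathbf{t}^{-1}$ and preserves the equality of rational functions (as opposed to merely formal power series), but this is precisely what the paragraph preceding the proposition sets up — one may fix a rational expression for $H(R,\mathbf{t})$ whose denominator $\prod(1-\mathbf{t}^{\alpha_i})$ is $\Delta$-stable, so that applying $\delta$ to numerator and denominator separately is legitimate.
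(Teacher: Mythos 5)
Your proposal is correct and follows essentially the same route as the paper: both rest on the observation (set up in the paragraph before the proposition) that one can choose a $\Delta$-stable generating set so the rational expressions for $H(R,\mathbf{t})$ and $H(R,\mathbf{t}^{-1})$ are fixed by $\Delta$, and then read off from Stanley's functional equation $H(R,\mathbf{t}^{-1}) = (-1)^d \mathbf{t}^{\omega} H(R,\mathbf{t})$ that $\mathbf{t}^{\omega}$, and hence $\omega$, must be $\Delta$-invariant. Your phrasing via ``apply $\delta$ to both sides and use uniqueness of $\omega$'' is just a slight repackaging of the paper's ``solve for $\mathbf{t}^{\omega}$ as the ratio of two $\Delta$-invariant rational functions,'' and the care you take about $\delta$ commuting with $\mathbf{t}\mapsto\mathbf{t}^{-1}$ and acting on rational expansions is exactly the point the paper's preceding paragraph is there to justify.
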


\subsection{Gorenstein semigroup algebras}\label{gor-semi-alg}

Let  $L \subset \R^n$ be a lattice, and let $\mathcal{P} \subset \R^n$ be an integral polyhedral cone.  Recall that this means that $\mathcal{P}$ is obtained as a set of solutions to a finite collection of $L$--integral linear inequalities: $\langle v, a_i\rangle \geqslant 0$, $a_i \in L$.   The set $P = \mathcal{P} \cap L$ is naturally a semigroup under addition with identity $0$, we let $\C[P]$ be the associated semigroup algebra. The semigroup $P$ is said to be positive if $0$ is the only invertible element.     For any facet $F \subset \mathcal{P}$, let $P_F \subset P$ be the points not in $F$.  The vector space $\C[P_F]$ is a prime ideal in $\C[P]$, likewise the set of points $int(P) = \cap_{F} P_F$ spans an ideal  $\C[int(P)] \subset \C[P].$ For the following  see \cite[Theorem 6.35]{BH}. 

\begin{theorem}
The algebra $\C[P]$ is a Cohen-Macaulay ring, and it has a canonical module isomorphic to the ideal $\C[int(P)]$.  Furthermore, 
if $P$ is positive, $\C[int(P)]$ provides the unique $P-$grading on the canonical module of $\C[P]$. 
\end{theorem}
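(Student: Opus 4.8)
The plan is to derive this from Hochster's theorem on normal affine semigroup rings, together with the cohomological description of the canonical module. First I would record the elementary structure: by Gordan's lemma the semigroup $P=\mathcal P\cap L$ is finitely generated, so $R:=\C[P]$ is a Noetherian integral domain, graded by $\Z P$ (hence by $P$), with Krull dimension $d:=\dim\mathcal P=\mathrm{rank}\,\Z P$. Since the inequalities defining $\mathcal P$ are $L$--integral, one has $P=\mathrm{cone}(P)\cap\Z P$, i.e. $P$ is a normal monoid, so $R$ is integrally closed and $\mathrm{Spec}(R)$ is a normal affine toric variety with torus $\mathrm{Spec}(\C[\Z P])$. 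When $P$ is positive, $\mathcal P$ is pointed and $R_0=\C$, so $R$ is positively graded and is $*$local with respect to its $P$--grading.

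The core of the argument, and the step I expect to be the main obstacle, is the Cohen--Macaulay property. I would prove $\mathrm{depth}\,R=d$ by computing the local cohomology $H^\bullet_\mathfrak m(R)$ along the graded maximal ideal $\mathfrak m$ using the Ishida complex attached to a simplicial (not necessarily unimodular) triangulation of the cone $\mathcal P$: a finite $\Z P$--graded complex built from the localizations of $R$ at the faces of the triangulation. In each graded component of degree $-\mathbf a$ this complex becomes, up to a shift, the reduced cochain complex of an explicitly described polyhedral subcomplex of the triangulated cone $\mathcal P$; normality of $P$ forces that subcomplex to be contractible, empty, or the full boundary sphere of a cross--section, so its reduced cohomology lies in top degree only, and the top degree is attained precisely for $\mathbf a\in int(P)$. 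Therefore $H^i_\mathfrak m(R)=0$ for $i<d$ — this is Hochster's theorem, and it yields that $R$ is Cohen--Macaulay — and simultaneously $H^d_\mathfrak m(R)$ is identified as the $\Z P$--graded vector space with a one--dimensional piece in degree $-\mathbf a$ for each $\mathbf a\in int(P)$ and zero in all other degrees. (Alternatively, Cohen--Macaulayness follows from the fact that affine toric varieties have rational singularities, via a toric resolution, though that route does not as cleanly produce the description of $H^d_\mathfrak m(R)$ needed next.)

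Granting this, I would finish as follows. Because $R$ is Cohen--Macaulay and a graded quotient of a polynomial ring, its canonical module is the graded Matlis dual $\Omega(R)\cong{}^*\!\mathrm{Hom}_\C\big(H^d_\mathfrak m(R),\C\big)$; by the computation above, $\Omega(R)$ is a $\Z P$--graded vector space with a one--dimensional piece in each degree $\mathbf a\in int(P)$ and nothing else, so $\Omega(R)\cong\C[int(P)]$ as graded vector spaces. Since adding any point of the cone to an interior lattice point stays in the interior, $P+int(P)\subseteq int(P)$, so $\C[int(P)]$ is an ideal of $R$; checking that the displayed isomorphism is $R$--linear then identifies $\Omega(R)$ with the ideal $\C[int(P)]$ as an $R$--module. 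Finally, when $P$ is positive $R$ is $*$local for its $P$--grading, so its graded canonical module is unique up to graded isomorphism; as $\C[int(P)]$ with the grading inherited from $R$ is such a module, it is the unique $P$--graded canonical module of $\C[P]$. The remaining verifications — the graded--dual description of $\Omega$, the ideal property, and $R$--linearity of the identification — are routine.
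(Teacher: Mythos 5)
The paper states this result as a recalled fact, citing Bruns--Herzog, Theorem 6.3.5 (Hochster's theorem on normal affine semigroup rings), and gives no proof of its own. Your argument correctly reconstructs the standard proof of that theorem---Gordan's lemma and normality, the Ishida-complex computation of $H^\bullet_{\mathfrak m}(\C[P])$ which simultaneously yields Cohen--Macaulayness and identifies $H^{\dim \mathcal P}_{\mathfrak m}$ degree by degree, and graded local duality to obtain $\Omega(\C[P]) \cong \C[int(P)]$---so there is nothing in the paper to contrast it against.
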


It follows that $\C[P]$ is Gorenstein if and only if $\C[int(P)]$ is a principal module, indeed the following is Corollary $6.3.8$ in \cite{BH}.

\begin{proposition}
The affine semigroup algebra $\C[P]$ is Gorenstein if and only if there is a $w \in int(P)$ such that any $x \in int(P)$ can be expressed as $x = y + w$ for some $y \in P.$
\end{proposition}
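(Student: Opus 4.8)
The plan is to read off this proposition from the theorem just stated together with the description of the Gorenstein property recalled at the beginning of Section~\ref{gor-sec}. First, note that $\C[P]$ is a Cohen--Macaulay domain: it is a domain because $P$ is a submonoid of the lattice $L$ and hence cancellative, it is Cohen--Macaulay by the preceding theorem, and it is finitely generated over $\C$ (as $P$ is a finitely generated semigroup), so it is a homomorphic image of a polynomial ring and its canonical module $\Omega(\C[P])$ is defined. By that theorem the canonical module is the monomial ideal $\C[int(P)]$, carrying its natural $P$--grading (which, since we take $P$ positive as in the applications, is the unique $P$--grading on it). By \cite[Theorem 3.3.7]{BH}, $\C[P]$ is Gorenstein if and only if this canonical module is a principal module isomorphic to $\C[P]$. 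So the whole content is to translate ``$\C[int(P)]$ is a principal $\C[P]$--module'' into the stated combinatorial condition.

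For the forward implication I would argue as follows. Since $P$ is positive, $\C[P] = \bigoplus_{p \in P} \C\,\chi^p$ is a positively $P$--graded $\C$--algebra with degree--zero part $\C$, and $\C[int(P)] = \bigoplus_{x \in int(P)} \C\,\chi^x$ is a finitely generated graded $\C[P]$--module with one--dimensional graded pieces. The graded Nakayama lemma then yields a homogeneous minimal generating set; if the module is principal this set consists of a single monomial $\chi^w$ with $w \in int(P)$. Now $\C[int(P)] = \C[P]\,\chi^w$ unwinds monomial by monomial to the assertion that for every $x \in int(P)$ there is $y \in P$ with $\chi^x = \chi^y\chi^w$, i.e. $x = y + w$. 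Conversely, given such a $w$, the ideal $\C[P]\,\chi^w$ is contained in $\C[int(P)]$ (since $int(P)$ is the intersection of the facet complements and $w$ lies in it) and contains every monomial $\chi^x$ with $x \in int(P)$ by hypothesis, so $\C[int(P)] = \C[P]\,\chi^w$; and the map $\C[P] \to \C[int(P)]$, $r \mapsto r\chi^w$, is an isomorphism of $\C[P]$--modules because $\C[P]$ is a domain. Hence the canonical module is free of rank one and $\C[P]$ is Gorenstein.

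I do not expect a real obstacle here; the statement is essentially a reformulation of the previous theorem. The only point requiring care is the passage through graded Nakayama, which is why one wants $\C[P]$ to be positively graded over $\C$ with degree--zero part $\C$, i.e.\ $P$ positive; this also matches the hypothesis under which the $P$--grading on the canonical module is unique. If one dropped positivity one would first split off the subgroup of units of $P$ and argue on a positive complement, but since every semigroup we meet in the later sections is positive there is no need to do so.
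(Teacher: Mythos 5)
The paper does not actually prove this proposition; it simply cites it as Corollary~6.3.8 of Bruns--Herzog \cite{BH}, so there is no ``paper's own proof'' to compare against. Your derivation from the preceding theorem (that $\C[\mathrm{int}(P)]$ is the canonical module of the Cohen--Macaulay ring $\C[P]$) is correct and is essentially the standard argument one would find in \cite{BH}: Gorenstein is equivalent to the canonical module being free of rank one, and since $\C[\mathrm{int}(P)]$ is a monomial ideal in a domain, being free of rank one is equivalent to being a principal ideal; graded Nakayama (valid because $P$ positive makes $\C[P]$ positively graded with $\C$ in degree zero) upgrades ``principal'' to ``principal with a monomial generator $\chi^w$, $w\in\mathrm{int}(P)$,'' and conversely $\chi^w\mapsto 1$ shows $\C[P]\chi^w\cong\C[P]$. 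Your remark about where positivity enters is also apt: the paper's preceding theorem invokes positivity for the uniqueness of the grading on the canonical module, and it is the same hypothesis that makes the graded Nakayama step go through; in the applications later in the paper all the semigroups are positive, so nothing is lost. In short, you supplied the proof that the paper delegates to a reference, and it is the right one.
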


\begin{example}{\rm
If we take $P$ to be the non-negative orthant $\R_{\geqslant 0}^n \subset \R^n$, then $\C[P]$
is isomorphic to the polynomial ring $S = \C[x_1, \ldots, x_n]$.  The multiweight
of the generator $dx_1\wedge \ldots \wedge dx_n$ of $\Omega(S)$ is $(1, \ldots, 1)$,
and indeed this weight generates $int(P)$.}
\end{example}

\begin{corollary}\label{gorrestrict}
Let $\mathcal{P} \subset \R^n$ be an integral polyhedral cone with respect to a lattice $L \subset \R^n$, and let $L \subset L'$ be an inclusion of lattices.  Then if $\C[\mathcal{P} \cap L']$ is Gorenstein and the generator $w \in int(\mathcal{P} \cap L')$ is in $L$, then $\C[\mathcal{P} \cap L]$ is Gorenstein.  
\end{corollary}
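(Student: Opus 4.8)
The plan is to use the combinatorial criterion for Gorenstein-ness of affine semigroup algebras (the Proposition immediately preceding the statement), and to transfer the generator $w$ from the finer lattice $L'$ to the coarser lattice $L$. First I would observe that since $\mathcal P$ is a fixed polyhedral cone in $\R^n$ and $L \subset L'$, the semigroup $P = \mathcal P \cap L$ sits inside $P' = \mathcal P \cap L'$, and — crucially — the facets of $\mathcal P$ are the same regardless of which lattice we intersect with. Hence $\mathrm{int}(P') = \mathrm{int}(\mathcal P) \cap L'$ and $\mathrm{int}(P) = \mathrm{int}(\mathcal P) \cap L$, so $\mathrm{int}(P) = \mathrm{int}(P') \cap L$. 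In particular the hypothesis ``$w \in L$'' together with $w \in \mathrm{int}(P')$ gives $w \in \mathrm{int}(P)$, so $w$ is a legitimate candidate generator in the coarser semigroup.

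Next I would invoke the Gorenstein criterion in the direction we are given: $\C[P']$ Gorenstein means every $x' \in \mathrm{int}(P')$ can be written $x' = y' + w$ with $y' \in P'$. I want the analogous statement with primes removed. So take an arbitrary $x \in \mathrm{int}(P) = \mathrm{int}(P') \cap L$. Since $x \in \mathrm{int}(P')$ as well, the $L'$-statement yields $y' \in P'$ with $x = y' + w$. The point is that $y' = x - w$ lies in $L$ (as both $x$ and $w$ are in $L$) and in $\mathcal P$ (since $y' \in P' = \mathcal P \cap L'$), hence $y' \in \mathcal P \cap L = P$. Setting $y = y'$ we get $x = y + w$ with $y \in P$. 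Since $x$ was arbitrary in $\mathrm{int}(P)$, the Proposition gives that $\C[P] = \C[\mathcal P \cap L]$ is Gorenstein, with the same generator $w \in \mathrm{int}(P)$.

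I should also make sure the hypotheses of the Proposition are met for $P$: it requires $P$ to be an affine semigroup (fine, since $L$ is a lattice and $\mathcal P$ an integral cone with respect to $L$) and — for the ``unique grading'' clause — positivity, though positivity is not actually needed for the Gorenstein equivalence itself as stated; I would note that $P$ is positive whenever $P'$ is, since $P \subset P'$ and any unit of $P$ would be a unit of $P'$. Thus no extra assumption is needed.

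The only genuinely delicate point is the identification $\mathrm{int}(P) = \mathrm{int}(P') \cap L$, i.e.\ that ``interior point of the semigroup'' is really the same as ``interior point of the cone that happens to be a lattice point,'' and that this notion is stable under refining the lattice. This is where one must use that $\mathrm{int}(P)$ was \emph{defined} as $\cap_F P_F$ with $F$ ranging over facets of $\mathcal P$ (a purely cone-theoretic, lattice-independent set of facets), rather than as some relative-interior notion intrinsic to the abstract semigroup; once that is pinned down, everything else is a one-line set-theoretic manipulation $x - w \in (\mathcal P \cap L' ) \cap L = \mathcal P \cap L$. I expect this identification to be the main (and essentially only) obstacle, and it is dispatched by a sentence once the definitions are unwound.
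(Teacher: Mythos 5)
Your proof is correct, and it is the argument the paper implicitly has in mind: the paper states Corollary \ref{gorrestrict} without proof, treating it as an immediate consequence of the preceding Proposition (BH Corollary 6.3.8). Your key steps --- the observation that $\mathrm{int}(\mathcal P \cap L) = \mathrm{int}(\mathcal P \cap L') \cap L$ because the facets of $\mathcal P$ are lattice-independent, and the closure computation $y' = x - w \in (\mathcal P \cap L') \cap L = \mathcal P \cap L$ --- are exactly what is needed, and you correctly identify the definitional unwinding of $\mathrm{int}(P)$ as the only point requiring care.
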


\subsection{Deforming the Gorenstein property}

 In what follows a flat degeneration of a variety $X$ over an algebraically closed field $\C$ to a variety $X(\Delta)$ is taken to mean a scheme $E$ with a flat map $\pi: E \to \mathbb{A}^1_\C$, such that $\pi^{-1}(0) = X(\Delta)$ and $\pi^{-1}(C) = X$ for $C \neq 0.$   When the special fiber $X(\Delta)$ of this family is a normal toric variety, we say that $\pi: E \to \C$ defines a toric degeneration of $X.$   We show that the Gorenstein property behaves well with respect to flat degenerations.  We take $A$ to be a commutative Noetherian domain over a field $\C$, and we fix an increasing $\Z_{\geqslant 0}$ algebra filtration $F$ of $A$ by $\C$--vector spaces with $F_0 = \C$: 

\begin{equation}
A = \bigcup_{m \in \Z_{\geqslant 0}} F_m.
\end{equation}

\noindent
This filtration defines two related algebras, the Rees algebra:

\begin{equation}
R_F(A) = \bigoplus_{m \in \Z_{\geqslant 0}} F_m,
\end{equation}

\noindent
and the associated graded algebra:

\begin{equation}
gr_F(A) = \bigoplus_{m \in \Z_{\geqslant 0}} F_m/F_{m-1}.
\end{equation}

Let $t \in F_1$ be the copy of the identity of $1$ of filtration level $1$.  For any
filtration space $F_m \subset A$, the subspace $tF_m \subset F_{m+1}$
is the copy of $F_m$ inside $F_{m+1},$ this defines a $\C[t]$--algebra structure on $R_F(A).$   Furthermore, since $t$ is non-invertible and a non-zerodivisor, $R_F(A)$ is a faithfully flat $\C[t]$--module.  Let $E = Spec(R_F(A))$, then $E$ is a flat
family of affine schemes over the affine line.  We have the following exact sequence of $R_F(A)$ modules: 

$$
\begin{CD}
0 @>>> tR_F(A) @>>> R_F(A) @>>> gr_F(A) @>>> 0.\\
\end{CD}
$$ 

In particular, the fiber $E_0$ over $0$ is $Spec(gr_F(A))$.  Identifying $t$ with $1$ results
in the union $\cup_{m \in \Z_{\geqslant 0}} F_m = A$, so we identify $E_1$ (indeed $E_C$ for any $C \neq 0$) with $Spec(A).$ In this way, $F$ defines a flat degeneration of $A$ to the algebra $gr_F(A)$.  All the degenerations we use in this paper are of this form.  

\begin{proposition}\label{deggor}
If $gr_F(A)$ is Gorenstein then $A$ and $R_F(A)$ are Gorenstein as well. 
\end{proposition}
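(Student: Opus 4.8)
The plan is to deduce the Gorenstein property of $A$ and $R_F(A)$ from that of $gr_F(A)$ by exploiting the flat family $E = \mathrm{Spec}(R_F(A)) \to \mathbb{A}^1_\C$ constructed above, in which $gr_F(A)$ is the special fiber over $0$ and $A$ is every other fiber. The key point is that the Gorenstein property is an \emph{open} condition in flat families: for a flat finite-type morphism $\pi\colon E \to \mathbb{A}^1_\C$, the locus of points of $E$ at which $\pi$ is Gorenstein (i.e., the fiber through that point is Gorenstein at that point, in the sense that the local ring is Gorenstein) is open in $E$ (see, e.g., \cite[Tag 0C02]{stacks-project}, building on the fact that Gorenstein-ness of Noetherian local rings is open on $\mathrm{Spec}$ together with the local criterion of flatness).

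\medskip
First I would record that $R_F(A)$ is a flat, finite-type $\C[t]$-algebra with $t$ a non-zerodivisor, so $\pi\colon E \to \mathbb{A}^1_\C$ is flat, and that the fiber $E_0 = \mathrm{Spec}(gr_F(A))$ is Gorenstein by hypothesis. Next, I would invoke the openness result above: the Gorenstein locus $U \subset E$ of $\pi$ is open, and since it contains the entire fiber $E_0$, its complement $Z = E \setminus U$ is a closed subset disjoint from $E_0$. Hence $\pi(Z) \subset \mathbb{A}^1_\C$ is a constructible set (Chevalley) not containing $0$; because $R_F(A)$ is a domain, $E$ is irreducible, so $Z$ — being a proper closed subset — maps into a proper closed (hence finite) subset of $\mathbb{A}^1_\C$ unless $Z = \emptyset$. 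Actually the cleaner argument: $\C[t]$ is a PID, $E_0$ is a principal (Cartier) divisor cut out by $t$, and $Z \cap E_0 = \emptyset$ forces $t$ to be a unit on $Z$; but then $Z$ lies over $\mathbb{A}^1_\C \setminus \{0\}$, where all fibers are $\mathrm{Spec}(A)$. So to conclude $Z = \emptyset$ it suffices to show $\mathrm{Spec}(A)$ is Gorenstein — which is circular. Instead I would run the openness argument on the whole of $E$ directly: the Gorenstein locus $U$ is open, $G$-invariant, contains $E_0$; its complement is closed and disjoint from $E_0 = V(t)$, hence contained in the open set $\{t \neq 0\} = \mathrm{Spec}(A \otimes_\C \C[t,t^{-1}])$. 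But this is not itself enough without knowing $A$ is Gorenstein. The correct route is via \emph{semicontinuity on the base}: since $\pi$ is flat and proper — no, $\pi$ is not proper.

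\medskip
Let me restate the argument I would actually use, which avoids properness. I would use the following standard descent principle: if $\pi\colon E \to \mathbb{A}^1_\C$ is flat of finite type and the fiber $E_0$ is Gorenstein, then $E$ is Gorenstein in a neighborhood of $E_0$; more precisely, for each point $x \in E_0$, the local ring $\mathcal{O}_{E,x}$ has $\mathcal{O}_{E,x}/t\mathcal{O}_{E,x} = \mathcal{O}_{E_0,x}$ Gorenstein and $t$ a non-zerodivisor, so by \cite[Proposition 3.1.19(b)]{BH} (a regular element modulo which the quotient is Gorenstein implies the ring is Gorenstein), $\mathcal{O}_{E,x}$ is Gorenstein. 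Thus the open Gorenstein locus $U \subset E$ contains $E_0$. Now $R_F(A)$ is a \emph{graded} ring (graded by the filtration degree $m$, with $t$ homogeneous of degree $1$), and $E_0 = V(t)$ contains the irrelevant-ideal vertex; the Gorenstein locus of a graded ring is a graded-open subset, so if it contains the vertex it is everything. Concretely: being Gorenstein for $R_F(A)$ is equivalent to $(R_F(A))_{\mathfrak{m}}$ Gorenstein for the unique graded maximal ideal $\mathfrak{m} = \bigoplus_{m \geqslant 1} F_m$ (using \cite[Exercise 3.6.20]{BH}, the graded analogue), and $\mathfrak{m} \supset (t)$ corresponds to a point of $E_0 \subset U$; hence $R_F(A)$ is Gorenstein. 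Finally, $A = R_F(A)/(t-1)R_F(A)$ with $t - 1$ a non-zerodivisor, and the quotient of a Gorenstein ring by a non-zerodivisor is Gorenstein (\cite[Proposition 3.1.19(b)]{BH} again), so $A$ is Gorenstein.

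\medskip
\textbf{The main obstacle} I anticipate is the passage from ``Gorenstein near the special fiber'' to ``Gorenstein everywhere,'' i.e., justifying that the graded maximal ideal $\mathfrak{m}$ of $R_F(A)$ lies in the closure of — indeed over — the point $0 \in \mathbb{A}^1_\C$, and that checking the Gorenstein property for the $^*$local ring at $\mathfrak{m}$ suffices for all of $R_F(A)$. This is where the grading is essential and where one must be careful that $R_F(A)$ is genuinely $\Z_{\geqslant 0}$-graded with $F_0 = \C$ in degree $0$, so that it has a well-defined irrelevant maximal ideal playing the role of the origin; the filtration hypotheses $F_0 = \C$ and $A = \bigcup F_m$ are exactly what guarantees this. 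Once that reduction is in place, every remaining step is a direct application of the ``regular element'' characterization of Gorenstein local rings from \cite[Ch. 3]{BH}.
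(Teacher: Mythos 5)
Your final argument (after you discard the circular openness attempt and the properness detour) is essentially the paper's: reduce via the graded-local criterion of \cite[Exercise 3.6.20]{BH} to the localization $R_F(A)_{\mathfrak{m}}$ at the graded maximal ideal $\mathfrak{m}=\bigoplus_{m\geqslant 1}F_m$, observe that $t$ is a regular element there whose quotient is a localization of the Gorenstein ring $gr_F(A)$, conclude $R_F(A)$ is Gorenstein, and then pass to $A = R_F(A)/(t-1)$ by the same regular-element reasoning. The only cosmetic difference is that you invoke the regular-element criterion \cite[Prop.~3.1.19(b)]{BH} directly, whereas the paper applies the flat-local-map criterion \cite[Cor.~3.3.15]{BH} over the DVR $\C[t]_{(t)}$ --- in this setting the two are the same statement.
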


\begin{proof}
We use Corollary $3.3.15$ of \cite{BH}: if $f: (R, m) \to (S, n)$ is a flat, local map
of Noetherian local rings, and if $R$ and $S/mS$ are Gorenstein, then $S$ is
Gorenstein as well.   For $(R, m)$ we take $(\C[t]_t, t)$ and for $(S, n)$ we take $(R_F(A)_M, M)$, where $M = \bigoplus_{m > 0} F_m$.   A graded algebra such as $R_F(A)$ with $F_0 = \C$ is Gorenstein if and only if its localization at its maximal graded ideal is Gorenstein (see e.g.  \cite[Exercise 3.6.20]{BH}), so $gr_F(A)$ Gorenstein implies that the localization $gr_F(A) \otimes_{R_F(A)}R_F(A)_M$
is Gorenstein.  By the above exact sequence, this is $R_F(A)_M/t$.  This proves that $R_F(A)$ is Gorenstein, and implies that  $R_F(A)/(t-C)$ is Gorenstein for any $C \in \C,$ since these algebras are all quotients of $R_F(A)$ by an irreducible element.   In particular, $A = R_F(A)/(t-1)$ must also be Gorenstein. 
\end{proof}

\section{The space \texorpdfstring{$P_3(G)$}{P3G}}\label{application-sec}

The proof of Theorem \ref{maintheorem} depends on establishing the Gorenstein property for the configuration space $P_3(G)$ (or rather a degeneration of this space). We start by showing this property holds when $G$ is semisimple and simply connected, and then we use combinatorial methods to extend the result to all reductive $G.$

 For $G$ a connected reductive group and $U \subset G$ a maximal unipotent subgroup, $P_n(G)$ is constructed as the following GIT quotient: 

\begin{equation}
P_n(G) = G \ql [G / U]^n.
\end{equation}

The variety $G / U$ is the spectrum of the total coordinate ring of the flag variety $G/B$, 
for $B$ the Borel subgroup which contains $U.$    The maximal torus $T$ normalizes $U$, so there is a residual $T$--action on the right hand side of $G / U$, and the isotypical components of the coordinate ring $\C[G / U]$ with respect to this action are precisely the irreducible representations of $G$. In particular the support of the corresponding multigrading of this decomposition is the set of dominant weights in the Weyl chamber $\Delta$, see \cite[Section 2]{popovvinberg72}:

\begin{equation}
\C[G / U] = \bigoplus_{\lambda \in \Delta} V(\lambda).
\end{equation}

The $T$--actions on the components $G / U$ in the definition of $P_n(G)$ endow this space with a rational $T^n$--action, the isotypical components of this action are the $n-$fold tensor product invariant spaces of $G:$

\begin{equation}
\C[P_n(G)] = \bigoplus_{\vec{\lambda} \in \Delta^n} [V(\lambda_1) \otimes \ldots \otimes V(\lambda_n)]^G.
\end{equation}

\noindent
The support of the $\Delta^n$--multigrading of $\C[P_n(G)]$ is an affine semigroup $T_n(G) \subset \Delta^n$.  A tuple $\vec{\lambda}$ is in $T_n(G)$ if and only if there is $G-$invariant vector in the tensor product $V(\lambda_1) \otimes \ldots \otimes V(\lambda_n)$; note that this is also the support of the multigraded Hilbert function $H_{T^n}(P_n(G), \mathbf{t})$.  We let $\mathcal{T}_n(G)$ be the convex hull of $T_n(G).$

\subsection{The semisimple and simply connected case}

We begin with a proof that $P_n(G)$ is Gorenstein when $G$ is semisimple and simply connected.   We distinguish the lemma from the general result as it is a consequence of more general techniques from invariant theory.

\begin{lemma}\label{pnufdgor}
Let $G$ be semisimple and simply connected, then $P_n(G)$ is factorial and Gorenstein. 
\end{lemma}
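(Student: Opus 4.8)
The plan is to prove the two assertions—factoriality and Gorensteinness of $P_n(G)$—by the same strategy used for the character variety in Section \ref{ufd-sec}, namely by realizing $P_n(G)$ as a GIT quotient of a factorial variety by a semisimple group. First I would observe that $P_n(G) = G \ql [G/U]^n$, so it is the GIT quotient of $(G/U)^n$ by the action of $G$. Since $G$ is semisimple and simply connected, it has no nontrivial characters, so once we know $(G/U)^n$ is factorial, the ring of invariants $\C[(G/U)^n]^G$ will be a UFD by \cite{Pop72}[Remark 3, Page 376]. Thus factoriality of $P_n(G)$ reduces to factoriality of $G/U$.

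For the factoriality of $G/U$ itself, the key input is that $\C[G/U] = \bigoplus_{\lambda \in \Delta} V(\lambda)$ is the Cox ring (total coordinate ring) of the flag variety $G/B$. By the general theory of Cox rings, the total coordinate ring of a smooth projective variety with finitely generated free divisor class group and only constant invertible global functions is a UFD; alternatively one can cite the classical fact (going back to Popov, or the description of $G/U$ as an affine spherical variety) that $\C[G/U]$ is factorial precisely because $G$ is simply connected so that $\mathrm{Pic}(G/B)$ is freely generated by the fundamental weight line bundles and each generator of a weight space $V(\varpi_i)$ of highest weight gives a generator of the corresponding rank-one graded piece. Then $(G/U)^n$ is a product of factorial varieties, hence factorial. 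Applying the quotient argument above yields that $P_n(G)$ is factorial, hence its divisor class group and Picard group both vanish.

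For the Gorenstein conclusion, I would invoke the same route as in the corollary to Theorem \ref{ufd-theorem}: by Murthy's theorem \cite{Murthy}, a Cohen–Macaulay UFD is Gorenstein. The ring $\C[P_n(G)]$ is Cohen–Macaulay by the Hochster–Roberts theorem \cite{HR}, since it is the ring of invariants of a linearly reductive group acting on the coordinate ring of the (smooth, hence regular, hence Cohen–Macaulay) variety $(G/U)^n$. Combined with the UFD property just established, Murthy's theorem gives that $\C[P_n(G)]$ is Gorenstein. Note this argument is purely in the spirit of Section \ref{ufd-sec} and does not yet require the combinatorial machinery of Section \ref{gor-sec}; that machinery is needed only for the extension to general reductive $G$, where $G/U$ need not be factorial.

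The main obstacle I anticipate is establishing factoriality of $G/U$ (equivalently, that the Cox ring of $G/B$ is a UFD) cleanly enough to cite; one must be careful that "simply connected" is exactly the hypothesis that makes $\mathrm{Pic}(G/B)$ torsion-free and ensures the weight semigroup $\Delta$ generates freely enough that no nontrivial class-group relations appear. A secondary subtlety is checking that the $G$-action on $(G/U)^n$ has the property required by Popov's theorem (stable, or at least that the generic orbit is closed with trivial stabilizer in the relevant sense) so that the invariant ring of a factorial ring by a group without characters is again factorial—but since $G$ is semisimple this is precisely the setting of \cite{Pop72}[Remark 3, Page 376], so this should go through as in Theorem \ref{ufd-theorem}.
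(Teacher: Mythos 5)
Your proposal diverges from the paper's proof in the order in which invariants are taken, and this is where it runs into trouble. The paper computes $\C[P_n(G)]=\C[G^n]^{G\times U^n}$ by \emph{first} taking $G$-invariants, obtaining $\C[G^{n-1}]$ (a smooth, factorial group variety by Lemma~\ref{liegrouplemma}), and \emph{then} taking $U^n$-invariants; factoriality is preserved by \cite[Theorem~3.17]{popovvinberg}, and Cohen--Macaulayness follows because unipotent invariants of a smooth variety have rational singularities by \cite[Theorem~6]{popovcontractions}. You instead take $U^n$-invariants first to land on $\C[G/U]^{\otimes n}$, and then take $G$-invariants.

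The concrete gap is in your Cohen--Macaulay step. You invoke Hochster--Roberts on the grounds that $(G/U)^n$ is ``smooth, hence regular.'' This is false: the quasi-affine homogeneous space $G/U$ is smooth, but the affine variety $\mathrm{Spec}\,\C[G/U]$ appearing in the GIT quotient $P_n(G)=G\ql [G/U]^n$ is its affine closure and is singular in general. Already for $G=\SL_3(\C)$, $\C[G/U]$ is generated by $V(\varpi_1)\oplus V(\varpi_2)\cong\C^3\oplus(\C^3)^*$ subject to the single relation $\sum x_iy_i=0$, so $\mathrm{Spec}\,\C[G/U]$ is a $5$-dimensional quadric cone with a singular point at the origin. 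Hochster--Roberts requires the ambient ring to be regular, so it cannot be applied directly to $\C[(G/U)^n]$; one would need Boutot's theorem together with a separate argument that $\overline{G/U}$ has rational singularities (which is essentially the content of the paper's appeal to \cite{popovcontractions}). There is a secondary soft spot in the factoriality step: ``$(G/U)^n$ is a product of factorial varieties, hence factorial'' is not automatic for singular normal affine varieties, since $\mathrm{Cl}$ is not Künneth-compatible in general without a smoothness hypothesis on one factor. This can be repaired (e.g.\ by taking $U^n$-invariants of the factorial ring $\C[G^n]$ directly, via \cite[Theorem~3.17]{popovvinberg}), but as written the step is asserted rather than proved. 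The paper's ordering of the two invariant-taking operations is precisely what keeps the intermediate object smooth and sidesteps both of these issues.
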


\begin{proof}
We show that $\C[P_n(G)]$ is factorial and Cohen-Macaulay, then the lemma follows from the result of Murthy, \cite{Murthy}.   The algebra $\C[P_n(G)]$ is obtained as the invariant subring of $\C[G^n]$ with respect to an action by $G \times U^n$, and we may exchange the order by which we take invariants.  The invariants in $\C[G^n]$ by the left action of $G$ give the coordinate ring of $G^{n-1}$, which is factorial by Lemma \ref{liegrouplemma}.  By \cite[Theorem 3.17]{popovvinberg}, factoriality is preserved by passing to unipotent invariants, since $U^n$ is connected with no non-trivial characters.   The Cohen-Macaulay property can be handled in a similar manner, as $G^{n-1}$ is smooth, and taking unipotent invariants introduces at worst rational singularities by \cite[Theorem 6]{popovcontractions} (see also \cite[Theorem 1.3]{M14}).

\end{proof}

When $G$ is semisimple its Weyl chamber $\Delta$ is a simplicial cone, this means that it can be identified with a positive orthant in a real vector space.  As a consequence $\mathcal{T}_3(G)$ and $T_3(G)$ are subsets of a positive orthant, and the multigraded Hilbert function $H_{T^3}(P_3(G), \mathbf{t})$ satisfies the assumptions in Theorem \ref{stanley}.

\subsection{Toric degeneration of $P_3(G)$}\label{toricp3}

Now we discuss the toric degenerations of $P_3(G)$ constructed in \cite{M14} in more detail. If $G$ is connected and reductive, then for each choice $\mathbf{i} \in R(w_0)$ of a reduced decomposition of the longest element of the Weyl group $W$ of $G$ there is a flat degeneration of $P_3(G)$ to a normal toric variety $X(C_{3, \mathbf{i}}(G))$.   These degenerations are all equivariant with respect to the action of $T^3$ on $P_3(G)$, so it follows that there is a surjective map of polyhedral cones:

\begin{equation}
\pi_3: \mathcal{C}_{3, \mathbf{i}}(G) \to \mathcal{T}_3(G).
\end{equation}

\noindent
The map $\pi_3$ restricts to a surjective map of affine semigroups $\pi_3: C_{3, \mathbf{i}}(G) \to T_3(G)$. As the toric degenerations constructed in \cite{M14} are $T^3$--equivariant,  they must preserve the multigraded Hilbert function of $P_3(G)$:

\begin{equation}
H_{T^3}(\C[C_{3, \mathbf{i}}(G)], \mathbf{t}) = H_{T^3}(\C[P_3(G)], \mathbf{t}).
\end{equation}

\begin{lemma}
Let $G$ be semisimple and simply connected, then there is a point $\omega_{3, \mathbf{i}}(G) \in C_{3, \mathbf{i}}(G)$ such that $int(C_{3, \mathbf{i}}(G)) = \omega_{3, \mathbf{i}}(G) + C_{3, \mathbf{i}}(G)$, in particular $\C[C_{3, \mathbf{i}}(G)]$ is Gorenstein. Furthermore, for $(\lambda_1, \lambda_2, \lambda_3) = \pi_3(\omega_{3, \mathbf{i}}(G)) \in T_3(G),$ we must have $\lambda_1 = \lambda_2 = \lambda_3 = \lambda_G$ with $\lambda_G^* = \lambda_G.$
\end{lemma}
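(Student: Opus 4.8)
The plan is to produce the Gorenstein generator of $\C[C_{3,\mathbf{i}}(G)]$ explicitly, using the combinatorial description of the cone $\mathcal{C}_{3,\mathbf{i}}(G)$ from \cite{M14}, and then identify its image under $\pi_3$. First I would invoke the Gorenstein criterion for affine semigroup algebras (the Proposition following \cite[Theorem 6.35]{BH} recalled above): it suffices to exhibit $\omega_{3,\mathbf{i}}(G) \in int(C_{3,\mathbf{i}}(G))$ such that $int(C_{3,\mathbf{i}}(G)) = \omega_{3,\mathbf{i}}(G) + C_{3,\mathbf{i}}(G)$. The toric variety $X(C_{3,\mathbf{i}}(G))$ is normal, and its defining cone has a known facet description coming from the string-polytope/Berenstein--Zelevinsky inequalities underlying the degeneration; I would write down the interior as the locus where every facet inequality is strict, and then check that subtracting the ``all-ones'' vector $\omega_{3,\mathbf{i}}(G)$ — the sum of the primitive inward normals' dual generators, i.e. the unique lattice point obtained by shifting each supporting hyperplane inward by one — lands back in the cone. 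Concretely this reduces to verifying that the facet functionals take value exactly $1$ larger on the interior than their minimum on the cone, which is the standard ``reflexive-type'' computation; normality of $X(C_{3,\mathbf{i}}(G))$ guarantees that $\omega_{3,\mathbf{i}}(G)$ is a genuine lattice point and that the shift is by a semigroup element.

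Next I would transport this to $P_3(G)$. Since the degeneration is $T^3$-equivariant, $\pi_3\colon C_{3,\mathbf{i}}(G)\to T_3(G)$ is a surjection of semigroups preserving the multigrading, so $\pi_3(\omega_{3,\mathbf{i}}(G))$ lies in $int(T_3(G))$ (equivalently, in the support of the Hilbert function with all components dominant regular); call it $(\lambda_1,\lambda_2,\lambda_3)$. To see these three weights are equal I would use the $S_3$-symmetry of $P_3(G)$: permuting the three factors of $G/U$ is an automorphism of $P_3(G)$ fixing the $T^3$-Hilbert function up to the induced permutation of the three blocks of weights, so by Proposition \ref{symmetricgenerator} (applied with $\Delta$ the copy of $S_3$ permuting the weight blocks) the canonical-module degree $\omega$ of $P_3(G)$ is $S_3$-invariant, forcing $\lambda_1=\lambda_2=\lambda_3=:\lambda_G$. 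For the self-duality $\lambda_G^*=\lambda_G$ I would use the other natural symmetry: the map sending each point of $G/U$ to the corresponding point for the dual representation (induced by $g\mapsto (g^{-1})^{\mathsf{T}}$, equivalently the Dynkin-diagram automorphism $-w_0$) is again an automorphism of $P_3(G)$ permuting the Hilbert function by $\lambda\mapsto\lambda^*$ in each block, so the invariant degree must satisfy $\omega=\omega^*$ blockwise, i.e. $\lambda_G=\lambda_G^*$. One then notes that the canonical module of $\C[C_{3,\mathbf{i}}(G)]$ and that of $\C[P_3(G)]$ sit in the same multidegree because the Hilbert functions agree, so $\pi_3(\omega_{3,\mathbf{i}}(G))$ is exactly the canonical degree of $P_3(G)$, which is what these symmetry arguments constrain.

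The last step — ``in particular $\C[C_{3,\mathbf{i}}(G)]$ is Gorenstein'' — is then immediate from the semigroup Gorenstein criterion once the shift identity for $\omega_{3,\mathbf{i}}(G)$ is established.

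I expect the main obstacle to be the explicit verification of the shift identity $int(C_{3,\mathbf{i}}(G)) = \omega_{3,\mathbf{i}}(G) + C_{3,\mathbf{i}}(G)$ at the level of the combinatorics in \cite{M14}: one must control all facets of $\mathcal{C}_{3,\mathbf{i}}(G)$ — both those cutting out the three string cones and those enforcing the tensor-invariance (Berenstein--Zelevinsky) conditions — and show each is ``shifted by exactly one'' by the candidate generator, uniformly over all reduced words $\mathbf{i}$ and all simply connected semisimple $G$. The symmetry arguments pinning down $\lambda_1=\lambda_2=\lambda_3$ and $\lambda_G^*=\lambda_G$ are comparatively soft; the content is in the polyhedral computation, and in checking that $\omega_{3,\mathbf{i}}(G)$ is integral with respect to the correct lattice (so that Corollary \ref{gorrestrict} can later be applied when passing to non-simply-connected $G$).
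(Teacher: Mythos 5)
There is a genuine gap in the first half of your argument: you never actually establish that $\C[C_{3,\mathbf{i}}(G)]$ is Gorenstein, i.e.\ that the generator $\omega_{3,\mathbf{i}}(G)$ exists. Your plan is to write down the facet description of $\mathcal{C}_{3,\mathbf{i}}(G)$ (string-cone plus Berenstein--Zelevinsky inequalities) and check that the ``all-ones'' shift carries the interior back into the cone, but you correctly flag this as the hard step and do not carry it out --- and it is not a formality. Normality of $X(C_{3,\mathbf{i}}(G))$ does \emph{not} guarantee that the canonical ideal $\C[int(C_{3,\mathbf{i}}(G))]$ is principal; not every normal affine semigroup algebra is Gorenstein, so there is no a priori ``reflexive-type'' lattice point to find, and a facet-by-facet verification over all reduced words $\mathbf{i}$ and all semisimple simply connected $G$ would be a substantial combinatorial undertaking. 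The paper avoids this entirely by a different route: since $G$ is semisimple the Weyl chamber is simplicial, so the $T^3$-multigraded Hilbert function of $P_3(G)$ falls under Stanley's Theorem~\ref{stanley}; the toric degeneration is $T^3$-equivariant and hence preserves this Hilbert function; and $\C[P_3(G)]$ is already known to be Gorenstein by Lemma~\ref{pnufdgor} (factorial plus Cohen--Macaulay plus Murthy). Stanley's functional-equation characterization then transfers Gorensteinness across the degeneration to $\C[C_{3,\mathbf{i}}(G)]$, and the existence of $\omega_{3,\mathbf{i}}(G)$ follows from the affine-semigroup criterion. In short, the missing idea is that Gorensteinness is read off the Hilbert series, which is degeneration-invariant, so no polyhedral computation is needed.

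The second half of your argument --- using Proposition~\ref{symmetricgenerator} with the $S_3$-permutation of the three tensor factors to get $\lambda_1=\lambda_2=\lambda_3$, and the dualization symmetry $V(\lambda)\otimes V(\eta)\otimes V(\mu)\mapsto V(\lambda^*)\otimes V(\eta^*)\otimes V(\mu^*)$ to get $\lambda_G^*=\lambda_G$ --- matches the paper's proof and is fine once the generator's existence is in hand.
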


\begin{proof}
By the Theorem \ref{stanley} $\C[C_{3, \mathbf{i}}(G)]$ must be Gorenstein.  It follows that $\omega_3(G)$ must exist with the property $int(C_{3, \mathbf{i}}(G)) = \omega_{3, \mathbf{i}}(G) + C_{3, \mathbf{i}}(G)$, and by Proposition \ref{symmetricgenerator}, $\pi_3(\omega_{3, \mathbf{i}}(G)) \in T_3(G)$ is preserved by any finite group of $\Z$-linear symmetries of $H_{T^3}(P_3(G), \mathbf{t})$.    

Triple tensor product multiplicities are invariant with respect to permutation of the weights $\lambda_1, \lambda_2, \lambda_3$, this is because tensor product of representations is a commutative operation. Consequently, the three weights $(\lambda_1, \lambda_2, \lambda_3) = \pi_3(\omega_{3, \mathbf{i}}(G))$ must all coincide: $\lambda_1 = \lambda_2 = \lambda_3 = \lambda_G.$  Moreover, the space of invariants in the tensor product $V(\lambda) \otimes V(\eta) \otimes V(\mu)$ is dual to the space of invariants in $V(\lambda^*) \otimes V(\eta^*) \otimes V(\mu^*)$, so $\lambda_G = \lambda_G^*.$
\end{proof}

\begin{example}{\rm
When $G = \SL_m(\C)$ and a particular choice of reduced word $\mathbf{i},$ the semigroup $C_{3, \mathbf{i}}(G)$ is the cone of so-called Berenstein-Zelevinsky triangles $BZ_m$.   A point $x \in BZ_m$ is defined by a triangular array of integers as in Figure \ref{BZtriangle}.  
The entries of $x \in BZ_m$ must all be non-negative integers, and any two pairs $a,b; A, B$ of integers on opposite sides of a hexagon in the array must have the same sum: $a + b = A + B.$  The generator $\omega_{BZ_m}$ is depicted on the right hand side of Figure \ref{BZtriangle}, it is the triangle with all entries equal to $1.$  Each assignment of numbers is dual to a weighted graph supported on the vertices of the triangle, see Figure \ref{BZtriangle}. 

To compute the map $\pi_3: BZ_m \to T_3(\SL_m(\C))$ we choose a clockwise orientation on the diagram.  For $x \in BZ_m,$ let $(a_1, \ldots, a_{2m-2}), (b_1, \ldots, b_{2m-2}),  (c_1, \ldots, c_{2m-2})$ be the entries along the three edges of the diagram, ordered by the chosen orientation.  The three weights $\pi_3(x) \in T_3(\SL_m(\C))$ are then $[(a_1 + a_2, \ldots, a_{2m-3} + a_{2m-2}), (b_1 + b_2, \ldots, b_{2m-3} + b_{2m-2}), (c_1 + c_2, \ldots, c_{2m-3} + c_{2m-2})]$.   Following this recipe, $\pi_3(\omega_3(\SL_m(\C))) = [(2, \ldots, 2), (2,\ldots , 2), (2,\ldots, 2)]$.

\begin{figure}[htbp]
\centering
\includegraphics[scale = 0.5]{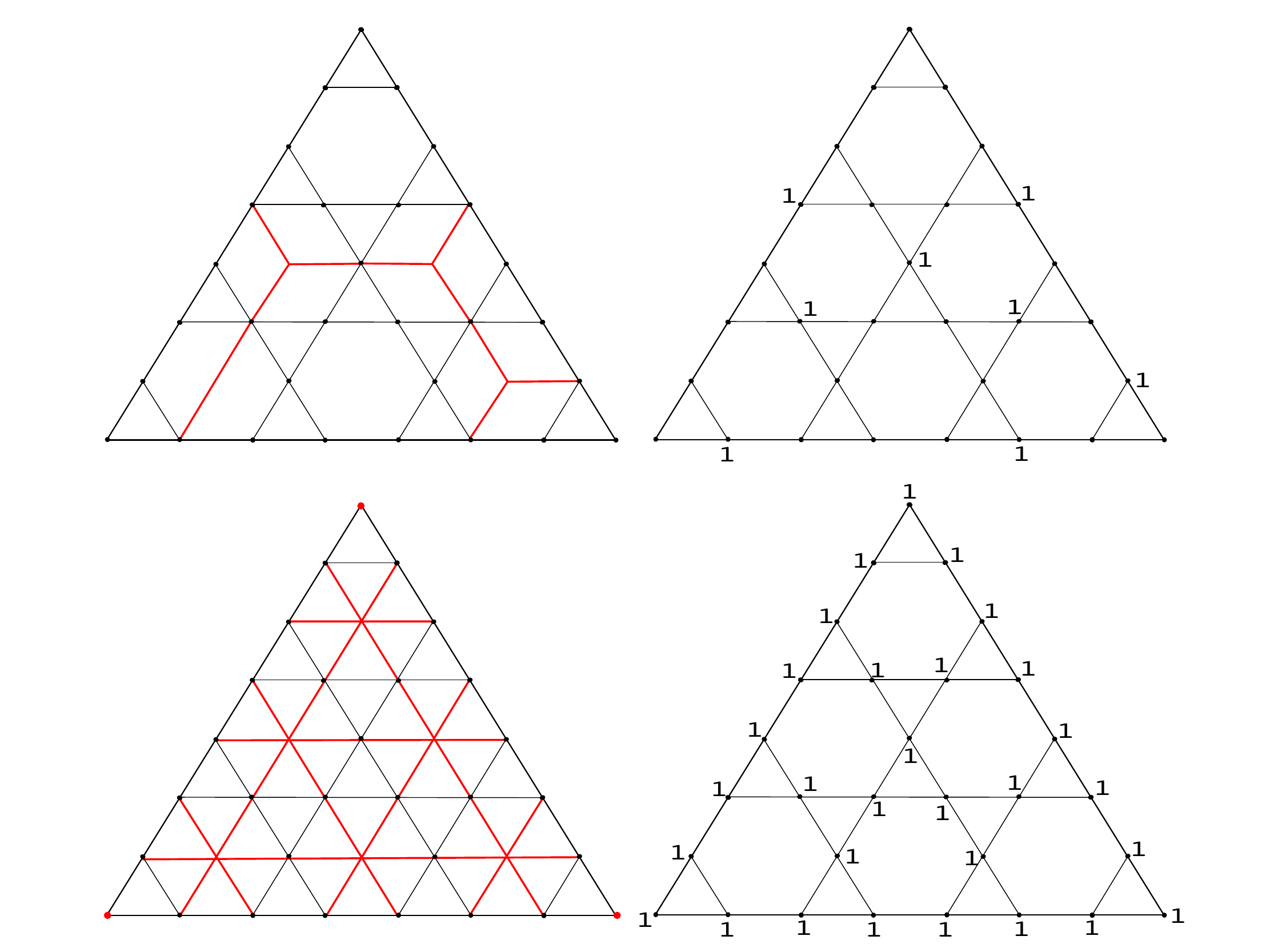}
\caption{Berenstein-Zelevinksy triangles (right) with graphical representations (left).  The element $\omega_3(\SL_5(\C))$ is represented in the bottom row. }
\label{BZtriangle}
\end{figure}
}
\end{example}

Recall that any connected reductive group $G$ is a finite quotient $G = [G^{ss}\times \hat{Z}]/K$, where $\hat{Z}$ is a torus, $G^{ss}$ is semisimple and simply connected, and $K \subset Z(G^{ss}) \times \hat{Z}$.  

\begin{lemma}\label{root}
Let $G$ be reductive, with $G = [G^{ss}\times \hat{Z}]/K$ as above, and let $\lambda \in \Delta$ be in the root lattice $\mathcal{R}(G^{ss})$.  Then $(\lambda, 0)$ is a dominant weight of $G.$
\end{lemma}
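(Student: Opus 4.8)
The plan is to understand the weight lattice and dominant-weight structure of $G = [G^{ss} \times \hat{Z}]/K$ concretely, and then locate $(\lambda, 0)$ inside the character lattice $\mathrm{X}(T)$ of a maximal torus $T$ of $G$. Recall that the character lattice of the maximal torus $T^{ss} \times \hat{Z}$ of $G^{ss} \times \hat{Z}$ is $\mathrm{X}(T^{ss}) \oplus \mathrm{X}(\hat{Z})$, where $\mathrm{X}(T^{ss})$ is the full weight lattice $\mathcal{P}(G^{ss})$ of the simply connected group. Passing to the quotient by the finite central subgroup $K$, the character lattice $\mathrm{X}(T)$ of $G$ is identified with the subgroup of $\mathcal{P}(G^{ss}) \oplus \mathrm{X}(\hat{Z})$ consisting of those characters that are trivial on $K$. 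So the first step is: a weight $(\mu, \nu) \in \mathcal{P}(G^{ss}) \oplus \mathrm{X}(\hat{Z})$ is a weight of $G$ if and only if it kills $K \subset Z(G^{ss}) \times \hat{Z}$.

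Next I would invoke the standard fact that the root lattice $\mathcal{R}(G^{ss})$ is precisely the subgroup of $\mathcal{P}(G^{ss})$ consisting of weights that are trivial on the center $Z(G^{ss})$ — this is the classical duality between $\mathcal{P}(G^{ss})/\mathcal{R}(G^{ss})$ and $Z(G^{ss})$ for a simply connected semisimple group. Therefore, if $\lambda \in \mathcal{R}(G^{ss})$, then the character $(\lambda, 0)$ of $T^{ss} \times \hat{Z}$ is trivial on $Z(G^{ss}) \times \{1\}$ (because $\lambda$ is trivial on $Z(G^{ss})$) and trivially trivial on $\{1\} \times \hat{Z}$ (because the $\hat Z$-component is $0$). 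Hence $(\lambda,0)$ is trivial on all of $Z(G^{ss}) \times \hat{Z}$, in particular on the subgroup $K$; by the previous step $(\lambda, 0)$ descends to a genuine character of $T$, i.e.\ a weight of $G$. Finally, dominance: the roots of $G$ coincide with the roots of $G^{ss}$ (the torus factor contributes no roots, and the quotient by a central $K$ does not change the root system), and $\lambda$ is assumed to lie in the dominant Weyl chamber $\Delta$; pairing $(\lambda, 0)$ against a coroot $\alpha^\vee$ of $G$ gives $\langle \lambda, \alpha^\vee\rangle \geqslant 0$ since the $\hat Z$-direction pairs to zero with every coroot. Thus $(\lambda, 0)$ is a dominant weight of $G$, which is the claim.

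The only real subtlety — and the step I would be most careful about — is matching up conventions: identifying $\mathrm{X}(T)$ for the quotient group $G$ with the annihilator of $K$ inside $\mathcal{P}(G^{ss}) \oplus \mathrm{X}(\hat{Z})$, and making sure the Weyl chamber $\Delta$ for $G$ used in the $P_n(G)$ construction is the image of (equivalently, agrees with) the dominant chamber of $G^{ss}$ under this identification. Once the lattice bookkeeping is set up correctly, the argument is the two-line observation that "$\lambda$ in the root lattice" $\Leftrightarrow$ "$\lambda$ trivial on $Z(G^{ss})$" $\Rightarrow$ "$(\lambda,0)$ trivial on $K$", together with the fact that adding a zero torus-component cannot destroy dominance. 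No deformation or Gorenstein machinery is needed here; this lemma is purely a statement about weight lattices that will be fed into the later combinatorial comparison of $C_{3,\mathbf{i}}(G)$ for reductive versus simply connected $G$.
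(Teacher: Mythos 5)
Your proof is correct and follows essentially the same route as the paper: the key step in both is that $\lambda$ in the root lattice means $\lambda$ is trivial on $Z(G^{ss})$, so $(\lambda,0)$ is trivial on $Z(G^{ss})\times\hat{Z}$ and hence on $K$, which is exactly what makes it descend to a character of $G$. You spell out the dominance check and the identification of $\mathrm{X}(T)$ with the annihilator of $K$ a bit more explicitly, and you slightly streamline the paper's intermediate step about lifting to the image of $K$ in $Z(G^{ss})$ by observing $(\lambda,0)$ is already trivial on the whole center, but these are cosmetic differences.
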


\begin{proof}
The weight $(\lambda,0)$ defines a character on $K \subset Z(G^{ss}) \times \hat{Z}.$ The $\hat{Z}$ component of $(\lambda, 0)$ is $0$, so it follows that this character lifts to a character on the image of $K$ in $Z(G^{ss})$.  As $\lambda$ is in the root lattice, it must define a trivial character on $Z(G^{ss})$, and therefore $K.$
\end{proof}

Tensor product invariants of $G$ can be described using the tensor product invariants of $G^{ss}$ and $\hat{Z}$.  The tensor product invariant space $(V(\lambda_1, \rho_1) \otimes V(\lambda_2, \rho_2) \otimes V(\lambda_3, \rho_3))^G$ is isomorphic to $(V(\lambda_1) \otimes V(\lambda_2) \otimes V(\lambda_3))^{G^{ss}}$, and we must have $\rho_1 + \rho_2 + \rho_3 = 0$ as a character of $\hat{Z}.$ 

A choice of reduced word decomposition $\mathbf{i}$ for $G$ imposes a choice for $G^{ss}$. Accordingly, following \cite{M14}, we can describe the polyhedral cone $\mathcal{C}_{3, \mathbf{i}}(G)$ as the product of $\mathcal{C}_{3, \mathbf{i}}(G^{ss})$ with the cone $\mathcal{T}_3(\hat{Z}) = \{(\rho_1, \rho_2, \rho_3) \ |\ \rho_i \in \mathcal{X}(\hat{Z}),  \rho_1 + \rho_2 + \rho_3 = 0\}$.  The affine semigroup $C_{3, \mathbf{i}}(G)$ is then the subsemigroup of the product $C_{3, \mathbf{i}}(G^{ss})$ with $T_3(\hat{Z})$ picked out by the sublattice defined by requiring $\lambda_i + \rho_i$ to be the $0$ character on $K$.   

\begin{theorem}\label{p3gor}
For any reductive group $G$, the affine semigroup algebra $\C[C_{3, \mathbf{i}}(G)]$ and the algebra $\C[P_3(G)]$ are Gorenstein.
\end{theorem}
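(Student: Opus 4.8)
The plan is to reduce everything to a Gorenstein statement about the affine semigroup algebra $\C[C_{3,\mathbf{i}}(G)]$ and then transport it to $P_3(G)$ via the degeneration machinery already in place. Recall from Section \ref{toricp3} that the flat, $T^3$--equivariant toric degeneration of $P_3(G)$ to $X(C_{3,\mathbf{i}}(G))$ presents $\C[C_{3,\mathbf{i}}(G)]$ as an associated graded algebra $\gr_F\C[P_3(G)]$. Hence, once $\C[C_{3,\mathbf{i}}(G)]$ is shown to be Gorenstein, Proposition \ref{deggor} gives at once that $\C[P_3(G)]$ is Gorenstein. So it suffices to prove that $\C[C_{3,\mathbf{i}}(G)]$ is Gorenstein for every connected reductive $G$.

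For that I would use the product description recalled above, $\mathcal{C}_{3,\mathbf{i}}(G) = \mathcal{C}_{3,\mathbf{i}}(G^{ss}) \times \mathcal{T}_3(\hat{Z})$, together with $C_{3,\mathbf{i}}(G) = \bigl(\mathcal{C}_{3,\mathbf{i}}(G^{ss}) \times \mathcal{T}_3(\hat{Z})\bigr) \cap L$, where $L$ is the finite--index sublattice of the ambient lattice $L'$ of the product picked out by requiring each $\lambda_i + \rho_i$ to be trivial on the finite group $K$. Set $\mathcal{P} = \mathcal{C}_{3,\mathbf{i}}(G^{ss}) \times \mathcal{T}_3(\hat{Z})$. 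Then $\mathcal{P} \cap L' = C_{3,\mathbf{i}}(G^{ss}) \times T_3(\hat{Z})$ (the first factor is a saturated semigroup because its associated toric variety is normal, and the second is a group), so $\C[\mathcal{P} \cap L'] \cong \C[C_{3,\mathbf{i}}(G^{ss})][u_1^{\pm 1}, \dots, u_k^{\pm 1}]$ is a localization of a polynomial extension of $\C[C_{3,\mathbf{i}}(G^{ss})]$; the latter is Gorenstein by the lemma above that establishes the Gorenstein property of $\C[C_{3,\mathbf{i}}(H)]$ for $H$ semisimple and simply connected, hence so is $\C[\mathcal{P} \cap L']$. Since the interior of a product cone is the product of the (relative) interiors, a generator of the canonical module of $\C[\mathcal{P} \cap L']$ is the semigroup element $w = (\omega_{3,\mathbf{i}}(G^{ss}), 0)$. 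By Corollary \ref{gorrestrict}, $\C[C_{3,\mathbf{i}}(G)] = \C[\mathcal{P} \cap L]$ is therefore Gorenstein provided $w \in L$; and since the $\hat{Z}$--coordinates of $w$ vanish, Lemma \ref{root} reduces this to the single assertion that the weight $\lambda_{G^{ss}}$ occurring in $\pi_3(\omega_{3,\mathbf{i}}(G^{ss}))$ (all three slots being equal, again by the lemma above) lies in the root lattice $\mathcal{R}(G^{ss})$.

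This last point is the genuine obstacle, and I would settle it by identifying $\lambda_{G^{ss}}$ with $2\rho = \sum_{\alpha > 0} \alpha$, which patently lies in $\mathcal{R}(G^{ss})$. One route: the $T^3$--equivariant degeneration of $P_3(G^{ss})$ preserves the multigraded Hilbert function, so by Theorem \ref{stanley} the multiweight $(\lambda_{G^{ss}}, \lambda_{G^{ss}}, \lambda_{G^{ss}})$ is the canonical multiweight of $\C[P_3(G^{ss})] = \bigl(\C[G^{ss}/U]^{\otimes 3}\bigr)^{G^{ss}}$. Now $G^{ss}/U$ is smooth, and since $\det(\mathfrak{g}/\mathfrak{u})$ is a trivial $U$--module it carries a $G^{ss}$--invariant, nowhere--vanishing top form whose weight under the residual torus action is $2\rho$ (as one reads off from the anticanonical class $2\rho$ of the flag variety $G^{ss}/B$); hence $(G^{ss}/U)^3$ is a smooth $G^{ss}$--variety with a $G^{ss}$--invariant volume form of $T^3$--weight $(2\rho, 2\rho, 2\rho)$. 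Transferring the canonical module through the GIT quotient by the character--free reductive group $G^{ss}$ (using that $\C[P_3(G^{ss})]$ has rational singularities, so the canonical module is computed by the invariant top forms) then shows that the canonical module of $\C[P_3(G^{ss})]$ is free of rank one, generated in $T^3$--weight $(2\rho, 2\rho, 2\rho)$, i.e.\ $\lambda_{G^{ss}} = 2\rho$. Alternatively, one can read $\lambda_{G^{ss}} = 2\rho$ off the explicit polyhedral models of $C_{3,\mathbf{i}}(G^{ss})$ from \cite{M14} directly: for $G^{ss} = \SL_m$ these are the Berenstein--Zelevinsky cones, $\omega_{3,\mathbf{i}}$ is the all--ones triangle, and $\pi_3$ carries it to $(2\rho, 2\rho, 2\rho)$ as in the example above, with the remaining Dynkin types and reduced words handled by the analogous descriptions. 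With $\lambda_{G^{ss}} \in \mathcal{R}(G^{ss})$ in hand, Corollary \ref{gorrestrict} yields that $\C[C_{3,\mathbf{i}}(G)]$ is Gorenstein, and then Proposition \ref{deggor} completes the proof that $\C[P_3(G)]$ is Gorenstein.
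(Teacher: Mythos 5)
Your overall structure — degeneration to $\C[C_{3,\mathbf{i}}(G)]$ via Proposition \ref{deggor}, the product decomposition $\mathcal{C}_{3,\mathbf{i}}(G^{ss})\times\mathcal{T}_3(\hat Z)$, the Laurent--extension step for $\C[\mathcal{P}\cap L']$, and the reduction via Corollary \ref{gorrestrict} and Lemma \ref{root} to showing $\lambda_{G^{ss}}\in\mathcal{R}(G^{ss})$ — is exactly the paper's. Where you diverge is the final step, and this is where you have overcomplicated a point that the paper dispatches in one line. The paper observes that since $(\lambda_{G^{ss}},\lambda_{G^{ss}},\lambda_{G^{ss}})\in T_3(G^{ss})$, the tensor cube $V(\lambda_{G^{ss}})^{\otimes 3}$ contains an invariant, so $V(\lambda_{G^{ss}}^*)\subset V(\lambda_{G^{ss}})\otimes V(\lambda_{G^{ss}})$, which forces $\lambda_{G^{ss}}+\lambda_{G^{ss}}-\lambda_{G^{ss}}^*\in\mathcal{R}(G^{ss})$; self-duality then gives $\lambda_{G^{ss}}\in\mathcal{R}(G^{ss})$ immediately. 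You already have both ingredients in hand (you cite the lemma giving equality of the three weights and self-duality), so this short argument was available to you.

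Instead you aim for the stronger identification $\lambda_{G^{ss}}=2\rho$, which is almost certainly true and is independently interesting, but neither of your two justifications is airtight as written. For the geometric route, the affinization $\mathrm{Spec}\,\C[G^{ss}/U]$ is not smooth in general (only $G^{ss}/U$ itself is), so the $G^{ss}$-invariant volume form you construct lives on the open smooth locus, and the passage from this to ``the canonical module of the GIT quotient $\C[P_3(G^{ss})]$ is free of rank one with generator in weight $(2\rho,2\rho,2\rho)$'' requires a statement, with proof or citation, about how canonical modules behave under GIT quotients by a semisimple group and under rational singularities at both stages; this is the genuine content of that route and cannot be summarized as ``transferring the canonical module.'' For the polyhedral route, you verify the claim only for $\SL_m$ via the all-ones Berenstein--Zelevinsky triangle; the parenthetical ``with the remaining Dynkin types and reduced words handled by the analogous descriptions'' is a genuine gap, since the shape of the cone $C_{3,\mathbf{i}}(G^{ss})$ and the location of its interior generator depend on the type and the word $\mathbf{i}$, and you have not shown the image under $\pi_3$ is $(2\rho,2\rho,2\rho)$ in those cases. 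In short: your conclusion is correct and your scaffolding matches the paper's, but the final step should be replaced by the paper's short root-lattice argument, or else the $2\rho$ claim needs to be proved uniformly in type and word, which you have not done.
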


\begin{proof}
We pass to $G^{ss}$, for $G =[G^{ss}\times \hat{Z}]/K.$  The affine semigroup algebra $\C[C_{3, \mathbf{i}}(G^{ss})]$ is Gorenstein by Lemma \ref{pnufdgor} and Theorem \ref{stanley}, therefore so is the algebra of Laurent polynomials $\C[C_{3, \mathbf{i}}(G^{ss})\times T_3(\hat{Z})] = \C[C_{3, \mathbf{i}}(G^{ss})]\otimes_{\C} \C[T_3(\hat{Z})].$ As $\C[C_{3, \mathbf{i}}(G)]$ is obtained from $\C[C_{3, \mathbf{i}}(G^{ss})\times T_3(\hat{Z})] $ by intersecting the semigroup $C_{3, \mathbf{i}}(G^{ss})\times T_3(\hat{Z})$ with the sublattice described above, to prove the theorem it suffices to show that $(\lambda_{G^{ss}},0)$ is a dominant weight of $G.$  By Lemma \ref{root} this follows if $\lambda_{G^{ss}}$ is in the root lattice of $G^{ss}.$  Since $V(\lambda_{G^ss})^{\otimes 3}$ contains an invariant, and $\lambda_{G^{ss}}$ is self-dual, we must have that $\lambda_{G^{ss}} + \lambda_{G^{ss}} - \lambda_{G^{ss}}^* = \lambda_{G^{ss}}$ is a member of the root lattice. 
\end{proof}

\section{Free group character varieties are Gorenstein}\label{gor-proof-sec}

In Section \ref{application-sec} we showed that the configuration spaces $P_3(G)$ are Gorenstein when $G$ is connected and reductive using
degeneration methods and the combinatorics of affine semigroups; now we use the same methods on $\mathcal{X}(F_g, G)$.  In \cite{M14} it is shown that one can construct a degeneration of $\mathcal{X}(F_g, G)$ to a normal affine toric variety $X(C_{\Gamma, \mathbf{i}}(G))$ which depends on a choice of the following data:

\begin{enumerate}
\item A trivalent graph $\Gamma$ with no leaves and $\beta_1(\Gamma) = g,$
\item An assignment $\mathbf{i}: V(\Gamma) \to R(w_0)$ of reduced word decompositions of $w_0$ to the vertices of $\Gamma.$
\end{enumerate}

\noindent
The information $(\Gamma, \mathbf{i})$ is used to define a convex polyhedral cone $\mathcal{C}_{\Gamma, \mathbf{i}}(G)$ with affine
semigroup $C_{\Gamma, \mathbf{i}}(G).$  By Proposition \ref{deggor}, we can prove that $\mathcal{X}(F_g, G)$ is Gorenstein provided we can establish that $X(C_{\Gamma, \mathbf{i}}(G))$ is Gorenstein. Happily, the cone $\mathcal{C}_{\Gamma, \mathbf{i}}(G)$ and the semigroup $C_{\Gamma, \mathbf{i}}(G)$ are constructed from the cones $\mathcal{C}_{3, \mathbf{i}(v)}(G)$ and semigroups $C_{3, \mathbf{i}(v)}(G)$, $v \in V(\Gamma)$, which are Gorenstein by Theorem \ref{p3gor}. 

Fix a trivalent graph $\Gamma$ as above, and consider the forest $\hat{\Gamma}$ obtained by splitting each edge $e \in E(\Gamma)$ into two edges.  This procedure results in $|V(\Gamma)|$ connected components, each a trinode $\tau_v$ for $v \in V(\Gamma).$ We let $e(v, 1), e(v, 2), e(v, 3)$ be the three edges of the trinode $\tau_v$, and we let $\Psi: \hat{\Gamma} \to \Gamma$ be the graph map which sends two edges $e(v, i), e(w, j)$ to the same edge $e \in E(\Gamma)$ whenever $e(v, i)$ and $e(w, j)$ are obtained from $e$ in the construction of $\hat{\Gamma}.$

\begin{figure}[htbp]
\centering
\includegraphics[scale = 0.3]{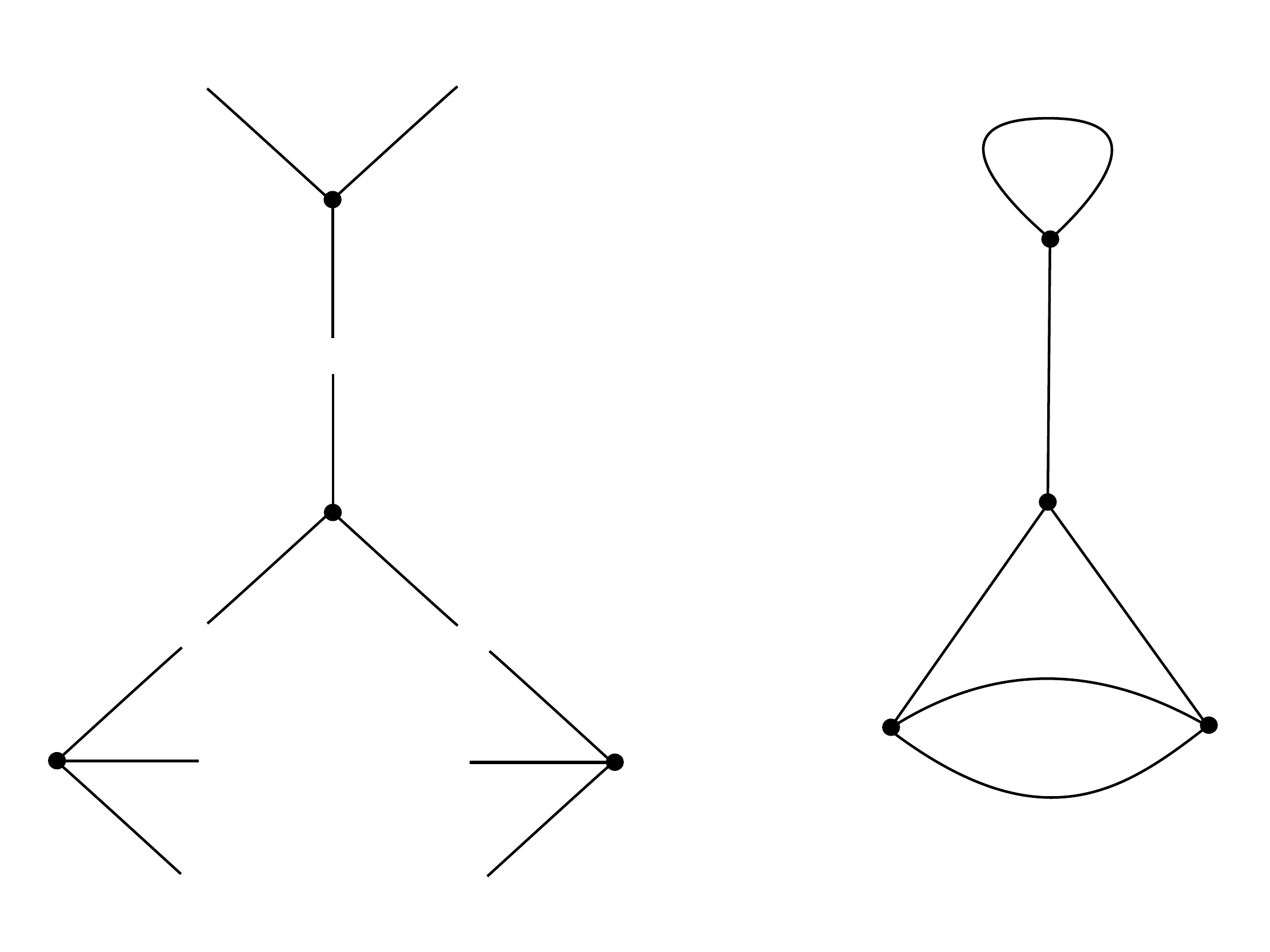}
\caption{A trivalent graph $\Gamma$ (right), with the associated forest $\hat{\Gamma}$ (left).}
\label{Graphs}
\end{figure}

 For each $v \in V(\Gamma)$ we label the components of the projection map $\pi_3: \mathcal{C}_{3, \mathbf{i}(v)}(G) \to \mathcal{T}_3(G)$ so that the image of a point $x \in \mathcal{C}_{3, \mathbf{i}(v)}(G)$ is a triple $(\pi_{e(v,1)}(x), \pi_{e(v,e)}(x), \pi_{e(v,3)}(x)) \in \mathcal{T}_3(G).$  

\begin{definition}
The cone $\mathcal{C}_{\Gamma, \mathbf{i}}(G)$ is the subcone of $\prod_{v \in V(\Gamma)} \mathcal{C}_{3, \mathbf{i}(v)}(G)$ defined by the equations $\pi_{v, i}(x) = \pi_{w, j}(x)^*$ whenever $\Psi(e(v, i)) = \Psi(e(w, j))$.  The semigroup $C_{\Gamma, \mathbf{i}}(G)$ is likewise the intersection $\mathcal{C}_{\Gamma, \mathbf{i}}(G) \cap \prod_{v \in V(\Gamma)} C_{3, \mathbf{i}(v)}(G)$.
\end{definition}

Recall the interior point $\omega_{3, \mathbf{i}}(G) \in C_{\Gamma, \mathbf{i}}(G)$, the next lemma shows that an analogous interior point exists in $C_{\Gamma, \mathbf{i}}(G)$.

\begin{lemma}
The point $\omega_{\Gamma, \mathbf{i}}(G) = \prod_{v \in V(\Gamma)} \omega_{3, \mathbf{i}(v)}(G)$ is in $C_{\Gamma, \mathbf{i}}(G) \subset \mathcal{C}_{\Gamma, \mathbf{i}}(G)$
\end{lemma}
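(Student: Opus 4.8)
The plan is to verify the two clauses in the definition of $C_{\Gamma, \mathbf{i}}(G)$ for the product point $\omega_{\Gamma, \mathbf{i}}(G) = \prod_{v} \omega_{3, \mathbf{i}(v)}(G)$: first, that it lies in the product semigroup $\prod_{v \in V(\Gamma)} C_{3, \mathbf{i}(v)}(G)$, and second, that it satisfies the edge--matching equations $\pi_{v, i}(x) = \pi_{w, j}(x)^*$ cutting $\mathcal{C}_{\Gamma, \mathbf{i}}(G)$ out of $\prod_{v} \mathcal{C}_{3, \mathbf{i}(v)}(G)$.

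The first clause is immediate: by Theorem~\ref{p3gor} (and the characterization of Gorenstein semigroup algebras) each factor $\omega_{3, \mathbf{i}(v)}(G)$ is the distinguished interior point of $C_{3, \mathbf{i}(v)}(G)$, so the product of the $\omega_{3, \mathbf{i}(v)}(G)$ lies in $\prod_{v} C_{3, \mathbf{i}(v)}(G)$ by the very definition of the product semigroup.

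For the second clause the crux is to identify $\pi_3(\omega_{3, \mathbf{i}(v)}(G))$ as a self-dual balanced triple $(\lambda_G, \lambda_G, \lambda_G)$ that is independent of both $v$ and the reduced word $\mathbf{i}(v)$. Writing $G = [G^{ss} \times \hat{Z}]/K$ as in Section~\ref{application-sec}, the proof of Theorem~\ref{p3gor} (via Corollary~\ref{gorrestrict}) exhibits $\omega_{3, \mathbf{i}(v)}(G)$ as the product of $\omega_{3, \mathbf{i}(v)}(G^{ss})$ with the weight--$0$ generator of the canonical module of the Laurent ring $\C[T_3(\hat{Z})]$. The lemma of Section~\ref{toricp3} gives $\pi_3(\omega_{3, \mathbf{i}(v)}(G^{ss})) = (\lambda_{G^{ss}}, \lambda_{G^{ss}}, \lambda_{G^{ss}})$ with $\lambda_{G^{ss}}^* = \lambda_{G^{ss}}$, and — since the Weyl chamber of $G^{ss}$ is simplicial — Theorem~\ref{stanley} together with Proposition~\ref{symmetricgenerator} identifies this weight triple as the one determined by the multigraded Hilbert function $H_{T^3}(P_3(G^{ss}), \mathbf{t})$ alone, which the toric degenerations preserve; hence $\lambda_{G^{ss}}$ is independent of $\mathbf{i}(v)$. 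Moreover $\lambda_{G^{ss}}$ lies in the root lattice of $G^{ss}$ (as observed in the proof of Theorem~\ref{p3gor}), so by Lemma~\ref{root} the weight $\lambda_G := (\lambda_{G^{ss}}, 0)$ is a dominant weight of $G$, it satisfies $\lambda_G^* = \lambda_G$, and $\pi_3(\omega_{3, \mathbf{i}(v)}(G)) = (\lambda_G, \lambda_G, \lambda_G)$. Consequently $\pi_{v, i}(\omega_{\Gamma, \mathbf{i}}(G)) = \lambda_G$ for every $v \in V(\Gamma)$ and $i \in \{1, 2, 3\}$, so whenever $\Psi(e(v, i)) = \Psi(e(w, j))$ we get $\pi_{v, i}(\omega_{\Gamma, \mathbf{i}}(G)) = \lambda_G = \lambda_G^* = \pi_{w, j}(\omega_{\Gamma, \mathbf{i}}(G))^*$, which is exactly the system of equations defining $\mathcal{C}_{\Gamma, \mathbf{i}}(G)$. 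Together with the first clause this shows $\omega_{\Gamma, \mathbf{i}}(G) \in C_{\Gamma, \mathbf{i}}(G)$.

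I expect the one delicate point in the write-up to be this uniformity statement — that each local generator $\omega_{3, \mathbf{i}(v)}(G)$ produces the \emph{same} self-dual weight $\lambda_G$ on all three of its legs and across all vertices — since that is precisely what allows the edge--matching equations to hold simultaneously rather than one edge at a time. Everything else is a matter of unwinding the two--layer definition of $C_{\Gamma, \mathbf{i}}(G)$ and invoking the self-duality $\lambda_G^* = \lambda_G$ already recorded for the $P_3$ case.
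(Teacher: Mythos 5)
Your proof is correct and follows essentially the same approach as the paper's one-line argument, which simply records the two facts you establish in detail: that $\pi_3(\omega_{3,\mathbf{i}(v)}(G)) = (\lambda_G,\lambda_G,\lambda_G)$ is the same self-dual triple irrespective of the vertex $v$ and the reduced word $\mathbf{i}(v)$, and that this forces the edge-matching conditions. You have merely unwound what the paper imports wholesale from the preceding lemma and from the proof of Theorem~\ref{p3gor}.
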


\begin{proof}
Stated in the notation above, the dominant weights $\pi_{e(v, i)}(\omega_{3, \mathbf{i}(v)}(G))$ are all the same dominant weight $\lambda(G)$, irrespective of the choice of vertex $v$ or the decomposition $\mathbf{i}(v)$.  Furthermore, $\lambda(G)$ is self-dual.
\end{proof}

\begin{corollary}\label{gorproof}
Every interior point $x \in int(C_{\Gamma, \mathbf{i}}(G))$ can be written $y + \omega_{\Gamma, \mathbf{i}}(G)$ for some $y \in C_{\Gamma, \mathbf{i}}(G)$. 
\end{corollary}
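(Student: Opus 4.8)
The plan is to reduce the statement about the subcone $C_{\Gamma, \mathbf{i}}(G)$ to the already-established Gorenstein property of the factors $C_{3, \mathbf{i}(v)}(G)$ at each vertex. First I would recall that by the lemma preceding Theorem \ref{p3gor} (together with Theorem \ref{p3gor} itself), for each vertex $v \in V(\Gamma)$ the cone $C_{3, \mathbf{i}(v)}(G)$ is Gorenstein with generator $\omega_{3, \mathbf{i}(v)}(G)$, so that $int(C_{3, \mathbf{i}(v)}(G)) = \omega_{3, \mathbf{i}(v)}(G) + C_{3, \mathbf{i}(v)}(G)$. Taking the product over all vertices, the cone $\prod_{v} C_{3, \mathbf{i}(v)}(G)$ is again Gorenstein (products of Gorenstein semigroup algebras are Gorenstein, since the tensor product of the associated algebras is Gorenstein), with generator exactly the point $\omega_{\Gamma, \mathbf{i}}(G) = \prod_v \omega_{3, \mathbf{i}(v)}(G)$ appearing in the preceding lemma. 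In particular, every interior point of $\prod_v C_{3, \mathbf{i}(v)}(G)$ is $\omega_{\Gamma, \mathbf{i}}(G)$ plus a point of $\prod_v C_{3, \mathbf{i}(v)}(G)$.

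Next I would analyze how $C_{\Gamma, \mathbf{i}}(G)$ sits inside this product. By definition it is cut out by the linear conditions $\pi_{v,i}(x) = \pi_{w,j}(x)^*$ whenever $\Psi(e(v,i)) = \Psi(e(w,j))$; these are face-defining hyperplane conditions coming from an inclusion of lattices, exactly the situation of Corollary \ref{gorrestrict}. The key observation is that $int(C_{\Gamma, \mathbf{i}}(G))$ is precisely the intersection of $int(\prod_v C_{3, \mathbf{i}(v)}(G))$ with the linear subspace defining the subcone: the gluing equations are equalities, not inequalities, so they do not contribute extra facets, and a point of the subcone lies in its relative interior exactly when it lies in the interior of the ambient product. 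Hence for $x \in int(C_{\Gamma, \mathbf{i}}(G))$ we may write $x = \omega_{\Gamma, \mathbf{i}}(G) + y$ with $y \in \prod_v C_{3, \mathbf{i}(v)}(G)$, and it remains only to check that $y$ in fact lies in the subcone $C_{\Gamma, \mathbf{i}}(G)$.

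This last check is where the self-duality recorded in the preceding lemma does the work: $\omega_{\Gamma, \mathbf{i}}(G)$ itself satisfies the gluing relations, with $\pi_{e(v,i)}(\omega_{3, \mathbf{i}(v)}(G)) = \lambda(G) = \lambda(G)^*$ for every edge. Since $x$ satisfies $\pi_{v,i}(x) = \pi_{w,j}(x)^*$ and the $\pi$'s are linear, and duality $(\cdot)^*$ acts linearly on weights, the difference $y = x - \omega_{\Gamma, \mathbf{i}}(G)$ also satisfies $\pi_{v,i}(y) = \pi_{w,j}(y)^*$; thus $y \in \mathcal{C}_{\Gamma, \mathbf{i}}(G)$, and combined with $y \in \prod_v C_{3, \mathbf{i}(v)}(G)$ we get $y \in C_{\Gamma, \mathbf{i}}(G)$ as required. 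The main obstacle I anticipate is the bookkeeping in the previous step — verifying carefully that the relative interior of the subcone is the trace of the interior of the product, i.e. that the linear gluing conditions genuinely introduce no new facets and no new lattice-point obstructions; once that is in hand the duality argument is a short linear computation.
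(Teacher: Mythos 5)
Your proposal is correct and follows essentially the same route as the paper's own proof: reduce to the product $\prod_v C_{3,\mathbf{i}(v)}(G)$ (Gorenstein with generator $\omega_{\Gamma,\mathbf{i}}(G)$), show the interior of the subcone lands inside the interior of the product, subtract $\omega_{\Gamma,\mathbf{i}}(G)$, and use self-duality of $\lambda_G$ together with linearity of $\pi_{v,i}$ and $(\cdot)^*$ to see that the remainder $y$ still satisfies the gluing equations.

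The one step you flag as incomplete is exactly the place where the paper supplies a short but necessary observation, and your proposed justification for it (``the gluing equations are equalities, not inequalities, so they do not contribute extra facets'') is not quite right as stated --- a linear section of a cone can certainly produce new facets, and ``no new facets'' is not the fact you need. What the paper actually uses is: $int(\mathcal{C}_{\Gamma,\mathbf{i}}(G)) \cap int\bigl(\prod_v \mathcal{C}_{3,\mathbf{i}(v)}(G)\bigr)$ is non-empty, which you know because $\omega_{\Gamma,\mathbf{i}}(G)$ lies in both (each $\omega_{3,\mathbf{i}(v)}(G)$ is interior to its factor, so the product point is interior to the product cone, and the preceding lemma puts it in $C_{\Gamma,\mathbf{i}}(G)$). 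Once the relative interiors meet, the standard convexity fact that $\mathrm{relint}(L \cap \mathcal{C}) = L \cap \mathrm{relint}(\mathcal{C})$ for a linear subspace $L$ meeting $\mathrm{relint}(\mathcal{C})$ gives the containment $int(\mathcal{C}_{\Gamma,\mathbf{i}}(G)) \subset int\bigl(\prod_v \mathcal{C}_{3,\mathbf{i}(v)}(G)\bigr)$, and the rest of your argument goes through verbatim. One small red herring in your write-up: Corollary \ref{gorrestrict} concerns passing to a finer lattice while the cone stays fixed, which is not the situation here (the cone changes, by intersecting with a linear subspace), so that citation should be dropped; it does no work in the argument anyway.
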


\begin{proof}
As $int(\mathcal{C}_{\Gamma, \mathbf{i}}(G)) \cap int(\prod_{v \in V(\Gamma)} \mathcal{C}_{3, \mathbf{i}(v)}(G))$ is non-empty, we conclude
that $int(\mathcal{C}_{\Gamma, \mathbf{i}}(G))$ is contained in $int(\prod_{v \in V(\Gamma)} \mathcal{C}_{3, \mathbf{i}(v)}(G))$. It follows
that any $x \in  int(C_{\Gamma, \mathbf{i}}(G))$ can be written as $y + \omega_{\Gamma, \mathbf{i}}(G)$ for $y \in \prod_{v \in V(\Gamma)} \C_{3, \mathbf{i}(v)}(G).$ As $\omega_{\Gamma, \mathbf{i}}(G)$ and $x$ satisfy the linear conditions $\pi_{v, i}(x) = \pi_{w, j}(x)^*$ above, $y$ must satisfy these conditions as well.  We conclude that $y \in C_{\Gamma, \mathbf{i}}(G)$.
\end{proof}

By Corollary \ref{gorproof}, $X(C_{\Gamma, \mathbf{i}}(G))$ is always a Gorenstein toric variety, so $\mathcal{X}(F_g, G)$ must be Gorenstein as well.  This concludes the proof of Theorem \ref{maintheorem}. 

\begin{remark}\label{pngor}{\rm
An identical argument to the one given in this section can be used to prove that $P_n(G)$ is a Gorenstein variety for $G$ connected and reductive, and $n > 3$.  This follows from an analogous use of the toric degeneration from \cite{M14}, whose existence is also proved in \cite{M14}, and replacing the trivalent graph $\Gamma$ with a trivalent tree $\tree$ with $n$ leaves.   }
\end{remark}

\begin{example}{\rm
Continuing with the example $G = \SL_m(\C)$, we may choose $\mathbf{i}: V(\Gamma) \to R(w_0)$ to give the cone $BZ_m$ of Berenstein-Zelevinksy triangles at each vertex of $\Gamma.$   We orient all triangles so that if two triangles $x, y$ meet along edges $e, f$, then these edges have opposite directions.  In this case, $x$ and $y$ can be glued if their respective entries $(a_1, \ldots, a_{2m-2})$ and $(A_1, \ldots, A_{2m-2})$ satisfy $ a_{2i-1} + a_{2i} = A_{2(m-i) -3} + A_{2(m-i) -2}$.  This condition corresponds to requiring the sums of pairs which line up geometrically in the plane to be equal. We call the resulting object $x_{\Gamma} \in C_{\Gamma, \mathbf{i}}(\SL_m(\C)) = BZ_{\Gamma, m}$ a Berenstein-Zelevinksy quilt.  Quilts can be thought of as a kind of combinatorial oriented surface dual to $\Gamma.$  Acyclic pieces of two quilts, with corresponding dual weighted graphs, are depicted in Figure \ref{Quilts}.

\begin{figure}[htbp]
\centering
\includegraphics[scale = 0.5]{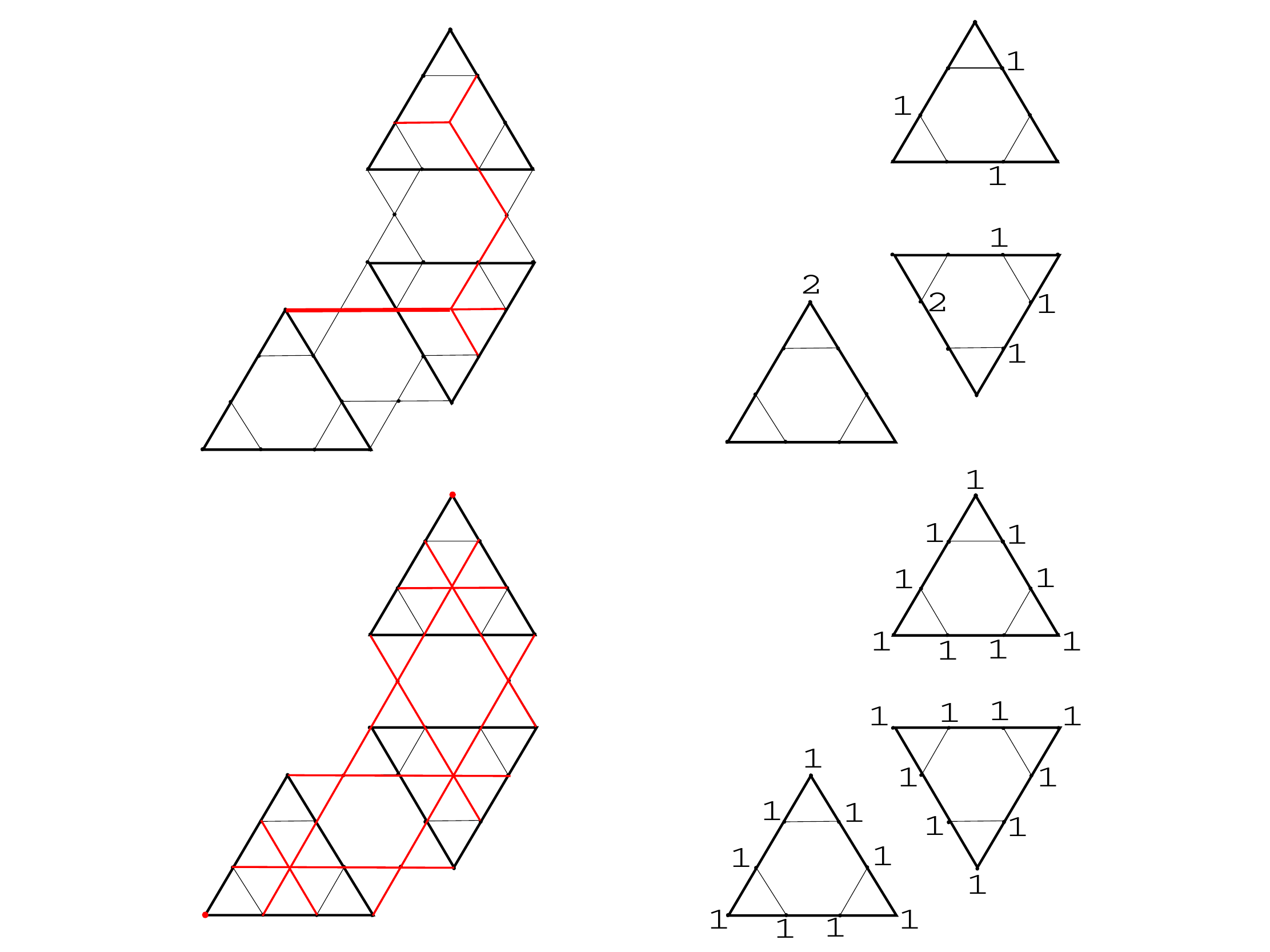}
\caption{Berenstein-Zelevinsky quilts (right) with their graphical representations (left).  The element $\omega_{\Gamma}(\SL_3(\C))$ is represented in the bottom row.}
\label{Quilts}
\end{figure}
}
\end{example}

\newcommand{\etalchar}[1]{$^{#1}$}

\end{document}